\documentclass[10pt, reqno]{amsart}
\numberwithin{equation}{section}
\usepackage{amssymb, color, cite}
\usepackage{hyperref}
\usepackage{graphicx}

\newcommand{\qtq}[1]{\quad\text{#1}\quad}

\let\Re=\undefined\DeclareMathOperator*{\Re}{Re}

\DeclareMathOperator{\atan}{arctan}

\newcommand{\R}{\mathbb{R}}
\newcommand{\C}{\mathbb{C}}

\newcommand{\eps}{\varepsilon}

\newcommand{\supp}{\text{supp}}

\renewcommand{\H}{\mathcal{H}}

\newtheorem{theorem}{Theorem}[section]

\newtheorem{lemma}[theorem]{Lemma}

\newtheorem{corollary}[theorem]{Corollary}

\newtheorem{proposition}[theorem]{Proposition}

\theoremstyle{definition}
\newtheorem{definition}[theorem]{Definition}
\newtheorem{remark}[theorem]{Remark}

\theoremstyle{remark}

\begin{document}

\title{The scattering map determines the nonlinearity}

\author[R. Killip]{Rowan Killip}
\address{Department of Mathematics, UCLA}
\email{killip@math.ucla.edu}

\author[J. Murphy]{Jason Murphy}
\address{Department of Mathematics \& Statistics, Missouri S\&T}
\email{jason.murphy@mst.edu}

\author[M. Visan]{Monica Visan}
\address{Department of Mathematics, UCLA}
\email{visan@math.ucla.edu}

\maketitle

\begin{abstract} Using the two-dimensional nonlinear Schr\"odinger equation (NLS) as a model example, we present a general method for recovering the nonlinearity of a nonlinear dispersive equation from its small-data scattering behavior.  We prove that under very mild assumptions on the nonlinearity, the wave operator uniquely determines the nonlinearity, as does the scattering map.  Evaluating the scattering map on well-chosen initial data, we reduce the problem to an inverse convolution problem, which we solve by means of an application of the Beurling--Lax Theorem. 
\end{abstract}

\section{Introduction}

We consider two-dimensional nonlinear Schr\"odinger equations of the form
\begin{equation}\label{nls}
i\partial_t u + \Delta u = F(u),\quad (t,x)\in\R\times\R^2,
\end{equation}
where we regard the nonlinearity $F:\C\to\C$ as an unknown parameter.  We restrict to a class of equations that admit a small-data scattering theory and demonstrate that the scattering map uniquely determines the nonlinear term.

The precise assumptions we need for the nonlinearity are as follows: 
\begin{definition}[Admissible]\label{D:admissible} We call $F:\C\to\C$ \emph{admissible} if $F(u)=h(|u|^2)u$ for some $h:[0,\infty)\to\C$ with 
\[
h(0) = 0 \qtq{and} |h'(\lambda)| \lesssim1+\lambda^{\frac{p}{2}-1}
\]
for some $2\leq p<\infty$.  We call $p$ the \emph{growth parameter} of $F$. 
\end{definition}

If $F$ is admissible with growth parameter $p$, then
\[
|F(z)| \lesssim |z|^{3}+|z|^{p+1}\qtq{and}|F_z(z)|+|F_{\bar z}(z)|\lesssim |z|^2+|z|^p
\]
uniformly for $z\in\C$.  Comparing with the standard power-type NLS, for which $F(u)=|u|^p u$, we see that the definition of admissible nonlinearities covers the entire $L^2$-critical and $L^2$-supercritical range.

For admissible nonlinearities, we have a small-data scattering theory in $H^1$: 

\begin{theorem}[Small data scattering]\label{T:scattering} Let $F:\C\to\C$ be an admissible nonlinearity with growth parameter $p\geq2$.  Define $s_p=1-\frac{2}{p}$ and
\begin{equation}\label{ball}
B_\eta = \{f\in H^1: \|f\|_{H^{s_p}}<\eta\}.
\end{equation}
There exists $\eta>0$ sufficiently small so that  any initial data $u_0\in B_\eta$ leads to a unique global solution $u$ to \eqref{nls} satisfying
\begin{equation}\label{scattering-bounds}
\|u\|_{L_t^3 L_x^6(\R\times\R^2)}\lesssim\|u_0\|_{L^2}\qtq{and}
\|u\|_{L_t^{3p/2}L_x^{3p}(\R\times\R^2)} \lesssim \|u_0\|_{ \dot H^{s_p}}.
\end{equation}
This solution scatters in both time directions, that is, there exist (necessarily unique) $u_\pm\in H^1$ so that
\begin{equation}\label{scatter}
\lim_{t\to\pm\infty} \|u(t)-e^{it\Delta} u_\pm\|_{H^1}=0.
\end{equation}
Additionally, for any $u_-\in B_\eta$ there exists a unique global solution $u$ to \eqref{nls} and a unique $u_+\in H^1$ such that \eqref{scatter} holds.
\end{theorem}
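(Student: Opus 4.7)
The plan is a Strichartz-based contraction mapping argument for the Duhamel formulation
\[
u(t) = e^{it\Delta}u_0 - i\int_0^t e^{i(t-s)\Delta}F(u(s))\,ds.
\]
Because $|F(u)|\lesssim |u|^3 + |u|^{p+1}$, the iteration has to be controlled simultaneously at two Strichartz levels: the $L^2$-admissible pair $(3,6)$ handles the cubic piece, while the $\dot H^{s_p}$-admissible pair $(3p/2,3p)$ handles the high-power piece. Admissibility in $\R^2$ is checked directly, since $\tfrac{2}{3}+\tfrac{2}{6}=1$ and $\tfrac{4}{3p}+\tfrac{2}{3p}=\tfrac{2}{p}=1-s_p$.

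\textbf{Function space and linear bounds.} I will work in the solution space with norm
\[
\|u\|_X := \|u\|_{L^\infty_t H^1_x} + \|u\|_{L^3_t L^6_x} + \|\nabla u\|_{L^3_t L^6_x} + \|u\|_{L^{3p/2}_t L^{3p}_x}.
\]
Homogeneous Strichartz yields $\|e^{it\Delta}u_0\|_X \lesssim \|u_0\|_{H^1}$, and sharpening the last piece gives $\|e^{it\Delta}u_0\|_{L^{3p/2}_t L^{3p}_x}\lesssim \|u_0\|_{\dot H^{s_p}}<\eta$, providing the smallness that will drive the contraction.

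\textbf{Nonlinear estimates.} Using the pointwise bounds displayed in the excerpt, I will estimate $F(u)$ and $\nabla F(u)$ in a dual Strichartz norm such as $L^{3/2}_t L^{6/5}_x$. For the high-power piece, H\"older gives
\[
\|\,|u|^p\,\nabla u\|_{L^{3/2}_t L^{6/5}_x}\lesssim \|u\|_{L^{3p/2}_t L^{3p}_x}^{p}\,\|\nabla u\|_{L^\infty_t L^2_x},
\]
which produces a small factor of order $\eta^p$. An analogous estimate handles $F(u)$ itself, distributing one factor of $u$ into $L^\infty_t L^2_x$ (via mass) and using Sobolev embedding $\dot H^{s_p}(\R^2)\hookrightarrow L^p$ when needed. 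For the cubic piece, I split using the $L^2$-admissible pair $(6,3)$ (admissible since $\tfrac{1}{6}+\tfrac{1}{3}=\tfrac{1}{2}$):
\[
\|\,|u|^2\,\nabla u\|_{L^{3/2}_t L^{6/5}_x}\lesssim \|u\|_{L^6_t L^3_x}^{2}\,\|\nabla u\|_{L^3_t L^6_x}.
\]
Assembling these estimates, the Duhamel map is shown to preserve and contract a suitable ball in $X$ once $\eta$ is small enough, giving a unique global $u\in X$ satisfying \eqref{scattering-bounds}. Uniqueness and the Lipschitz estimates for differences follow from the same Hölder splits applied to $F(u)-F(v)$ via the Mean Value Theorem applied to $h$.

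\textbf{Scattering, wave operator, and main obstacle.} Global finiteness of the Strichartz norms of $u$ means the Duhamel tails $\int_t^{\pm\infty} e^{-is\Delta}F(u)\,ds$ vanish in $H^1$ as $t\to\pm\infty$; hence $e^{-it\Delta}u(t)$ is Cauchy in $H^1$, yielding $u_\pm\in H^1$ and the scattering \eqref{scatter}. The wave operator on $B_\eta$ is produced by the parallel contraction for the equation $u(t)=e^{it\Delta}u_- - i\int_{-\infty}^t e^{i(t-s)\Delta}F(u(s))\,ds$. The main technical difficulty is the cubic term: it is $L^2$-critical in two dimensions while smallness is assumed only in $H^{s_p}$ (and for $s_p>0$ the mass $\|u_0\|_{L^2}$ may be large), so the naive cubic estimate does not yield a small contraction factor. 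Extracting the missing smallness — by interpolating the $L^6_t L^3_x$ norm against the small $L^{3p/2}_t L^{3p}_x$ norm, or by a bootstrap exploiting the smallness of $\|u_0\|_{\dot H^{s_p}}$ together with Sobolev embedding into $L^p$ — is the most delicate point of the argument, and is where the choice of $\eta$ has to absorb the non-trivial dependence on $\|u_0\|_{H^1}$.
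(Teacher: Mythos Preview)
Your overall strategy---contraction mapping via Duhamel and Strichartz---matches the paper's proof, and most of your estimates are on target.  However, the ``main obstacle'' you identify is a phantom, arising from a misreading of the hypothesis.

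The ball $B_\eta$ is defined using the \emph{inhomogeneous} norm $\|\cdot\|_{H^{s_p}}$, so $\|u_0\|_{H^{s_p}}<\eta$ forces $\|u_0\|_{L^2}<\eta$ as well.  The mass is \emph{not} allowed to be large.  Consequently your cubic estimate
\[
\|\,|u|^2\,\nabla u\|_{L^{3/2}_t L^{6/5}_x}\lesssim \|u\|_{L^6_t L^3_x}^{2}\,\|\nabla u\|_{L^3_t L^6_x}
\lesssim \|u_0\|_{L^2}^2\,\|\nabla u\|_{L^3_t L^6_x}
\]
already carries a small factor $\eta^2$, and no interpolation or bootstrap is needed.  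For the same reason, $\eta$ does \emph{not} depend on $\|u_0\|_{H^1}$: once the contraction closes at the $H^{s_p}$ level, the $H^1$ regularity is propagated passively by the estimate $\|\nabla F(u)\|_{L_t^1 L_x^2}\lesssim\bigl[\|u\|_{L_t^3 L_x^6}^2+\|u\|_{L_t^{3p/2}L_x^{3p}}^p\bigr]\|\nabla u\|_{L_t^3 L_x^6}$, with the bracket small and independent of $\|\nabla u_0\|_{L^2}$.

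Two smaller remarks.  First, the paper places the nonlinearity in $L_t^1 L_x^2$ rather than $L_t^{3/2}L_x^{6/5}$; either works.  Second, to close the $L_t^{3p/2}L_x^{3p}$ bound on the Duhamel term you need $|\nabla|^{s_p}$ applied to $F(u)$, which the paper handles via the fractional chain rule (it carries $\||\nabla|^{s_p}u\|_{L_t^3 L_x^6}$ in the iteration norm).  Your space $X$ omits this norm, so as written you cannot recover the second bound in \eqref{scattering-bounds} directly; you should either add $\||\nabla|^{s_p}u\|_{L_t^3 L_x^6}$ to $X$ or interpolate it between $\|u\|_{L_t^3 L_x^6}$ and $\|\nabla u\|_{L_t^3 L_x^6}$ a posteriori.
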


The mapping $u_0\mapsto u_+$ described in Theorem~\ref{T:scattering} is known as the (forward) \emph{wave operator} and will be denoted $\Omega_F:B\to H^1$.  The mapping $u_-\mapsto u_+$ is known as the \emph{scattering map}, which we denote by $S_F:B\to H^1$.  If $\Omega_F:B\to H^1$ and $\Omega_{\tilde F}:\tilde B:\to H^1$ are the wave operators corresponding to a pair of admissible nonlinearities, then Theorem~\ref{T:scattering} guarantees that $B\cap \tilde B\neq\emptyset$.  This ensures that there is a common domain on which we may compare the scattering behaviors. 

Our main result asserts that knowledge of either the wave operator or the scattering map uniquely determines the nonlinearity in \eqref{nls}.

\begin{theorem}[Scattering determines the nonlinearity]\label{T} Let $F:\C\to\C$ and $\tilde F:\C\to\C$ be admissible, with potentially distinct growth parameters.  If $\Omega_F=\Omega_{\tilde F}$ or $S_F=S_{\tilde F}$ on $B\cap \tilde B$, then $F=\tilde F$.
\end{theorem}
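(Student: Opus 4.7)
Set $G = F - \tilde F$. Admissibility of $F$ and $\tilde F$ gives $G(z) = g(|z|^2)z$ with $g = h - \tilde h$ satisfying $g(0)=0$ and the admissibility bound with growth $p:=\max(p_F, p_{\tilde F})$, so the goal is to show $g\equiv 0$. By Duhamel, $\Omega_F=\Omega_{\tilde F}$ is equivalent to
\[
\int_0^\infty e^{-is\Delta}\bigl[F(u^F(s))-\tilde F(u^{\tilde F}(s))\bigr]\,ds=0
\]
for all $u_0\in B\cap\tilde B$, where $u^F,u^{\tilde F}$ solve \eqref{nls} with initial data $u_0$. My plan is to probe this identity with small Gaussian data, exploit the explicit Gaussian structure of the Schr\"odinger flow to reduce to a 1-D integral equation on $g$, and then apply Beurling--Lax.

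First, using the bounds \eqref{scattering-bounds}, I would show that for $u_0=\delta\phi$ with $\phi$ Schwartz and $\delta$ small, $u^F$ and $u^{\tilde F}$ agree with the linear evolution $\delta e^{is\Delta}\phi$ up to contributions of strictly higher order in $\delta$. Writing
\[
F(u^F)-\tilde F(u^{\tilde F}) = G(\delta e^{is\Delta}\phi) + \bigl[F(u^F)-F(\delta e^{is\Delta}\phi)\bigr] - \bigl[\tilde F(u^{\tilde F})-\tilde F(\delta e^{is\Delta}\phi)\bigr]
\]
and estimating the bracketed terms by H\"older together with the admissibility bounds on $F$ and $\tilde F$ would yield
\[
\int_0^\infty e^{-is\Delta}\, G\bigl(\delta e^{is\Delta}\phi\bigr)\,ds = \mathcal{O}_\phi(\delta^{3+\gamma})
\]
for some $\gamma>0$, uniformly in small $\delta$. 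This is a cancellation statement: $G$ is cubic-order in $\delta$, but the integral must be strictly smaller.

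Next I would specialize to complex Gaussians $\phi_z(x)=e^{-z|x|^2}$ with $\Re z>0$ and pair the above identity against $\phi_z$. Because Gaussians are preserved by the Schr\"odinger flow, one has $|e^{is\Delta}\phi_z(x)|^2 = M(s;z)\,e^{-N(s;z)|x|^2}$ for explicit $M, N$, and the spatial integral of $g(\delta^2|e^{is\Delta}\phi_z|^2)|e^{is\Delta}\phi_z|^2$ collapses under $u=N(s;z)|x|^2$ to $\mathcal{G}(\delta^2 M(s;z))$, where $\mathcal{G}(\lambda):=\int_0^\lambda g(\mu)\,d\mu$. The remaining $s$-integral, after the change of variable $\lambda=\delta^2 M(s;z)$, becomes a one-dimensional Abel-type convolution equation on $\mathcal{G}$ parametrized by $\delta$ and $z$, whose Mellin/Laplace symbol is an outer function on a half-plane. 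The Beurling--Lax Theorem then identifies the closed shift-invariant subspace generated by these kernels as all of the relevant Hardy space, which forces $\mathcal{G}$ to vanish on the $\lambda$-range actually swept out by the chosen Gaussians. Taking $z=a+ib$ with $|b|/a$ large makes $\max_s M(s;z)=1+(b/a)^2$ arbitrarily large, while the constraint $\delta\|\phi_z\|_{H^{s_p}}<\eta$ only forces $\delta$ to shrink like a power of $a/|b|$; together these cover every prescribed $\lambda>0$, giving $g\equiv 0$ and thus $F=\tilde F$. The scattering-map case is identical with $\int_0^\infty$ replaced by $\int_{-\infty}^\infty$.

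The hardest step is extracting the clean convolution equation from the nonlinear scattering identity. Since $g$ is only Lipschitz near $0$, it cannot be Taylor-expanded and the scattering integrals depend on $\delta$ in a non-polynomial way; one must treat the full function $\mathcal{G}$ (not a Taylor coefficient) as the unknown and show that the higher-order corrections from $u^F-\delta e^{is\Delta}\phi$ do not obstruct inversion of the leading convolution. The use of Beurling--Lax rather than elementary Abel inversion seems essential precisely for this: it is the Hardy-space characterization of shift-invariant subspaces that allows one to conclude $g\equiv 0$ from an equation that holds only modulo the controlled errors produced by the nonlinear remainder terms.
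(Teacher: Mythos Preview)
Your overall strategy---Gaussian data, Born approximation, reduction to a one-dimensional convolution equation, and Beurling--Lax---matches the paper's. But there is a genuine gap in how you pass from an approximate identity to an exact one. You correctly derive
\[
\int_0^\infty e^{-is\Delta}\, G(\delta e^{is\Delta}\phi)\,ds = O_\phi(\delta^{3+\gamma}),
\]
but this is an equation with a nonzero right-hand side, and your claim that ``the Hardy-space characterization of shift-invariant subspaces allows one to conclude $g\equiv 0$ from an equation that holds only modulo the controlled errors'' is not correct. Corollary~\ref{C:BL} is a statement about exact orthogonality in $L^2$; it has no mechanism for absorbing remainder terms. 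In the paper, Beurling--Lax is invoked only \emph{after} an exact convolution identity has been established, and the reason it is needed (rather than, say, Wiener's Tauberian theorem) is the mismatched exponential growth of $H$ and $w$ at opposite ends of the line, not the presence of errors.

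The device you are missing is to decouple the amplitude from the small parameter. The paper takes $u_0^\sigma(x)=A e^{-|x|^2/4\sigma^2}$ with \emph{two} independent parameters: the amplitude $A$ furnishes the translation variable in the convolution (playing the role you assign to $\delta$), while the spatial scale $\sigma$ is a separate small parameter. Since $\|u_0^\sigma\|_{\dot H^s}\sim A\sigma^{1-s}$ and $s_p<1$, any fixed $A$ lands in $B\cap\tilde B$ once $\sigma$ is small. The key computation is that, by the scaling $e^{it\Delta}[u_0(\cdot/\sigma)](x)=[e^{i\sigma^{-2}t\Delta}u_0](x/\sigma)$, the leading space--time integral is \emph{exactly} $\sigma^4$ times a quantity independent of $\sigma$, whereas the nonlinear remainder is $O_A(\sigma^6)$. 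Dividing by $\sigma^4$ and sending $\sigma\to 0$ produces the exact identity
\[
\int_\R [H(k)-\tilde H(k)]\,w(k+2\log A)\,dk = 0 \qquad\text{for all } A>0,
\]
to which Beurling--Lax then applies cleanly. Your single parameter $\delta$ is asked to do both jobs at once; consequently the argument of $g$ collapses as $\delta\to 0$, you try to compensate by sending $|b|/a\to\infty$, but you never arrive at an exact equation---and without one the Hardy-space argument does not start.
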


There is a large body of literature concerning the recovery of the nonlinearity (as well as external potentials) from the scattering map in the setting of nonlinear dispersive equations (see e.g. \cite{SBUW, MorStr, CarGal, PauStr, Sasaki, Sasaki2, Sasaki3, SasakiWatanabe, Watanabe, Watanabe2, Watanabe3, Watanabe4, Weder1, Weder2, Weder3, Weder4, Weder5, Weder6}).  In general, these works either consider analytic nonlinearities or make other strong structural assumptions on the nonlinearity.  The work \cite{CarGal} provides an exhaustive treatment of the analytic case; other representative examples include the recovery of the coupling constant in a power-type nonlinearity \cite{MorStr}, the recovery of a Hartree potential \cite{Sasaki3}, or the recovery of an inhomogeneous coefficient in a nonlinearity of the form $q(x)|u|^p u$ \cite{Watanabe4}.  We were inspired to consider the problem discussed here by the recent work \cite{SBUW}, which established a result similar to Theorem~\ref{T} for nonlinear wave equations in three space dimensions.  In that paper the nonlinearity is assumed to be of quintic-type.  The authors of \cite{SBUW} employ techniques from microlocal analysis to study the propagation of singularities arising from nonlinear interactions, which in turn determine the higher order derivatives of the nonlinearity.

Compared to the previous literature, we work with very mild assumptions on the nonlinearity and prove that the \emph{entire} nonlinearity is determined by the small-data scattering behavior.  Our approach, which we describe below, is technically much simpler than the  analysis appearing in \cite{SBUW}. The simplicity of our arguments promises broad applicability.  To best present our method, we have chosen to focus on the concrete two-dimensional NLS problem laid out above. 

Let us now describe our strategy.  Our first observation is that it suffices to know the scattering behavior for a very narrow class of initial data, data for which the wave operator may be conflated with its Born approximation
\begin{align}
u_0\mapsto u_0 - \int_0^\infty e^{-it\Delta}F(e^{it\Delta}u_0)\,dt,
\end{align}
and likewise for the scattering map.

Taking this Born approximation for granted, we see that knowledge of the wave operator allows us to evaluate integrals of the form
\begin{align}
\int_0^\infty \langle e^{it\Delta}u_0 , F(e^{it\Delta}u_0)\rangle\,dt,
\end{align}
which may be interpreted as an inner product between the nonlinearity and the distribution function of the free solution $e^{it\Delta}u_0$; see Lemma~\ref{L:2}.

Pursuing this line of reasoning, we will ultimately be able to reduce the question of uniquely determining the nonlinearity to solving an inverse convolution problem.  Specifically, considering well-chosen Gaussian initial data, we will prove that if $\Omega_F=\Omega_{\tilde F}$, then
\begin{equation}\label{inverse-convolution}
\int_\R [G'(e^{-k})-\tilde G'(e^{-k})]e^{-k}w(k+\ell)\,dk = 0 \qtq{for all}\ell\in\R,
\end{equation}
where $G(|u|^2):=F(u)\bar u$ and $w$ is a weight related to the distribution function for the linear Schr\"odinger flow with Gaussian initial data.  The problem then reduces to showing that \eqref{inverse-convolution} implies that $G'\equiv \tilde G'$. Under mild hypotheses on the nonlinearity, this can be derived from Wiener's Tauberian Theorem.  To address the full range of admissible nonlinearities, however, we employ a theorem of Beurling and Lax characterizing shift-invariant subspaces of the Hardy space.  Here we take advantage of the fact that in two space dimensions, we are able to carry out explicit computations for $w$.  In particular, we prove that the Laplace transform of $w$ defines an outer function in the relevant half-plane.  This problem is complicated by the fact that $G'$ grows exponentially as $k\to-\infty$, while $w$ grows exponentially as $k\to+\infty$. 

The rest of this paper is organized as follows:  In Section~\ref{S:prelim} we introduce notation and collect basic lemmas. In Section~\ref{S:SDS}, we establish the small-data scattering theory for \eqref{nls} with admissible nonlinearities.  In Section~\ref{S:proof}, we reduce the proof of Theorem~\ref{T} to the inverse convolution problem described above, which we state as Theorem~\ref{P:key}. 
Section~\ref{S:key} is dedicated to the proof of Theorem~\ref{P:key}.  We first review the Beurling--Lax Theorem and relate this general result to the specific inverse convolution problem under consideration. We then demonstrate that the Laplace transform of $w$ is an outer function, which is precisely the input needed to apply the Beurling--Lax Theorem.  This part of the argument relies on an explicit computation of the Laplace transform, which is given in terms of the Gamma function.  With these ingredients in place, we complete the proof of Theorem~\ref{P:key}.  

In Section~\ref{S:special}, we show how additional restrictions on the nonlinearity greatly reduce the burden of understanding $w$.  Concretely, we show that one can recover polynomial-type nonlinearities without difficulty.

\subsection*{Acknowledgements} R. K. was supported by NSF grants DMS-1856755 and DMS-2154022.  J. M. was supported by a Simons Collaboration Grant. M. V. was supported by NSF grant DMS-2054194. 

\section{Preliminaries}\label{S:prelim}

We write $A\lesssim B$ to indicate that $A\leq CB$ for some $C>0$.  We indicate dependence on parameters via subscripts, e.g. $A\lesssim_u B$ means that $A\leq CB$ for some $C=C(u)$. If $A\lesssim B\lesssim A$, we write $A\approx B$.

We write $\langle\cdot,\cdot\rangle$ to denote the $L^2$ inner product. Given $q\in[1,\infty]$, we write $q'$ to denote the H\"older dual of $q$, that is, the solution to $\tfrac{1}{q}+\tfrac{1}{q'}=1$. 

We next record the standard Strichartz estimates for $e^{it\Delta}$ in the two-dimensional setting (see e.g. \cite{GinibreVelo}).  Recall that a pair $(q,r)\in(2,\infty]\times[2,\infty)$ is called \emph{Schr\"odinger admissible} in two space dimensions if $\tfrac{1}{q}+\tfrac{1}{r}=\tfrac{1}{2}$.

\begin{lemma}[Strichartz estimates] For any Schr\"odinger admissible pair $(q,r)$ and any $\varphi\in L^2$, 
\[
\|e^{it\Delta}\varphi\|_{L_t^q L_x^r(\R\times\R^2)}\lesssim \|\varphi\|_{L^2}. 
\]
Given an interval $I\ni 0$, Schr\"odinger admissible pairs $(q,r),(\tilde q,\tilde r)$, and $F\in L_t^{\tilde q'}L_x^{\tilde r'}(I\times\R^2)$,
\[
\biggl\| \int_0^t e^{i(t-s)\Delta}F(s)\,ds \biggr\|_{L_t^q L_x^r(I\times\R^2)}\lesssim \|F\|_{L_t^{\tilde q'}L_x^{\tilde r'}(I\times\R^2)}. 
\]
\end{lemma}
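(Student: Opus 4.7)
The plan is to derive both estimates from the dispersive bound together with unitarity, via a standard $TT^*$ argument and the Hardy--Littlewood--Sobolev inequality. Since Schr\"odinger admissibility in two dimensions forces $q,\tilde q>2$, the endpoint case is excluded and no Keel--Tao refinement is needed.

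First, I would start from the fundamental solution representation $e^{it\Delta}\varphi(x)=(4\pi it)^{-1}\int_{\R^2} e^{i|x-y|^2/(4t)}\varphi(y)\,dy$, which yields the dispersive estimate $\|e^{it\Delta}\varphi\|_{L^\infty_x}\lesssim |t|^{-1}\|\varphi\|_{L^1_x}$. Combined with the Plancherel identity $\|e^{it\Delta}\varphi\|_{L^2_x}=\|\varphi\|_{L^2_x}$, Riesz--Thorin interpolation then gives
\[
\|e^{it\Delta}\varphi\|_{L^r_x}\lesssim |t|^{-(1-2/r)}\|\varphi\|_{L^{r'}_x}\qtq{for}2\leq r\leq\infty.
\]

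Second, for the homogeneous bound I would invoke the $TT^*$ principle: the map $\varphi\mapsto e^{it\Delta}\varphi$ is bounded $L^2_x\to L^q_tL^r_x$ if and only if $(TT^*F)(t)=\int_\R e^{i(t-s)\Delta}F(s)\,ds$ is bounded $L^{q'}_tL^{r'}_x\to L^q_tL^r_x$. Applying Minkowski in $x$ and the interpolated dispersive estimate yields
\[
\|(TT^*F)(t)\|_{L^r_x}\lesssim\int_\R|t-s|^{-(1-2/r)}\|F(s)\|_{L^{r'}_x}\,ds,
\]
and Hardy--Littlewood--Sobolev in time closes the estimate precisely when $\tfrac{1}{q}+\tfrac{1}{r}=\tfrac12$ and $1-\tfrac{2}{r}\in(0,1)$, i.e.\ under Schr\"odinger admissibility.

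Third, the untruncated inhomogeneous bound with possibly different admissible pairs $(q,r)$ and $(\tilde q,\tilde r)$ follows by composing the homogeneous estimate and its dual. To pass to the retarded integral $\int_0^t$, I would apply the Christ--Kiselev lemma, whose hypothesis $\tilde q'<q$ holds automatically because $q,\tilde q>2$ give $\tilde q'<2<q$. The main technical point to watch is the bookkeeping of exponents in the HLS step; the only genuine obstruction in dimensions $\geq 3$ would be the endpoint pair, but that is excluded here by assumption, so no delicate atomic decomposition is needed.
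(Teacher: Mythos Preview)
Your argument is correct and is the standard route to the non-endpoint Strichartz estimates: dispersive bound plus unitarity, interpolation, $TT^*$ combined with Hardy--Littlewood--Sobolev for the homogeneous estimate, factorization through $L^2$ for the untruncated inhomogeneous estimate with mixed pairs, and Christ--Kiselev to recover the retarded integral (legitimate here since $\tilde q'<2<q$).

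The paper, however, does not supply a proof of this lemma at all; it simply records the estimates as background and cites Ginibre--Velo for them. So there is no ``paper's own proof'' to compare against---your write-up is more than the authors provide, and is entirely in line with how the cited reference (and the broader literature) establishes the result.
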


We will also use the following fractional calculus estimate from \cite{ChristWeinstein}.
\begin{lemma}[Fractional chain rule] Let $F:\C\to\C$ satisfy
\[
|F(u)-F(v)| \leq [K(u)+K(v)] |u-v|\qtq{for some} K:\C\to[0,\infty).
\]
For any $s\in(0,1)$, $r,r_1\in(1,\infty)$, and $r_2\in(1,\infty]$ satisfying $\tfrac{1}{r}=\tfrac{1}{r_1}+\tfrac{1}{r_2}$, we have
\[
\||\nabla|^s F(u)\|_{L^r} \lesssim \| K(u)\|_{L^{r_2}} \||\nabla|^s u\|_{L^{r_1}}.
\]
\end{lemma}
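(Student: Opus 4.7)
My plan is to start from the Stein square-function characterization of the fractional Laplacian, namely
\[
\||\nabla|^s f\|_{L^r} \approx \|\mathcal{D}_s f\|_{L^r}, \qquad \mathcal{D}_s f(x) := \biggl(\int_{\R^n}\frac{|f(x+y)-f(x)|^2}{|y|^{n+2s}}\,dy\biggr)^{1/2},
\]
which is valid for any $s\in(0,1)$ and $r\in(1,\infty)$. Granting this, the Lipschitz-type hypothesis on $F$ immediately gives the pointwise bound
\[
\mathcal{D}_s F(u)(x)^2 \leq \int_{\R^n}\frac{[K(u(x+y))+K(u(x))]^2\,|u(x+y)-u(x)|^2}{|y|^{n+2s}}\,dy,
\]
and expanding $(a+b)^2 \lesssim a^2 + b^2$ splits the task into two $L^r$ estimates.

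The piece in which $K$ is evaluated at the base point pulls cleanly out of the $y$-integral, yielding the pointwise bound $K(u(x))\,\mathcal{D}_s u(x)$. H\"older's inequality in $x$ with exponents $(r_1,r_2)$, together with the reverse direction of the square-function characterization applied to $u$, closes this piece and already produces the claimed right-hand side.

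The real content is the shifted piece, where $K(u(x+y))$ is entangled with the difference $|u(x+y)-u(x)|$. My plan would be to symmetrize via the substitution $z = x+y$, which exchanges the roles of the two points and reduces the integrand to a form parallel to the easy piece; the stray $K(u(z))$ weight is then absorbed using a scalar Hardy--Littlewood maximal function estimate. I expect this symmetrization to be the main obstacle, since one must verify that the maximal-function manipulation closes for the given exponents $(r_1,r_2,r)$ without loss. An alternative route, closer to the one Christ and Weinstein pursue in \cite{ChristWeinstein}, replaces the square function by a Littlewood--Paley decomposition and applies the Lipschitz hypothesis scale by scale to $P_N F(u)$; this gives cleaner book-keeping at the cost of a more elaborate setup, and is the route I would ultimately follow in a full write-up.
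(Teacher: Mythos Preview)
The paper does not prove this lemma; it is quoted verbatim from Christ--Weinstein \cite{ChristWeinstein} and used as a black box. So there is no in-paper argument to compare your sketch against.

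Your square-function route is a standard alternative to the Littlewood--Paley argument in \cite{ChristWeinstein}, and you have correctly identified where the work lies. The base-point piece is exactly as easy as you say. For the shifted piece, however, your proposed ``symmetrize and absorb with a maximal function'' step is more delicate than you indicate: the substitution $z=x+y$ only cleanly exchanges roles when one can Fubini the outer $L^r_x$ norm through the $y$-integral, i.e.\ when $r=2$. For general $r$ one needs an additional device---typically a dyadic decomposition in $|y|$ together with a vector-valued maximal inequality, or an appeal to the $A_p$ theory---to push $K(u(\cdot+y))$ back to a function of $x$ alone. This is doable (see e.g.\ Taylor's \emph{Tools for PDE} or Kenig--Ponce--Vega), but it is not the one-line symmetrization your sketch suggests. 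Since you already plan to fall back on the Littlewood--Paley argument of \cite{ChristWeinstein} for the write-up, this is moot; just be aware that the square-function shortcut, while conceptually cleaner, hides exactly the same amount of work in the shifted term.
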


\section{Small data scattering}\label{S:SDS}

This section is dedicated to the proof of Theorem~\ref{T:scattering}.  This will be achieved via the standard contraction mapping argument using the Duhamel formulation of \eqref{nls}, namely,
\begin{equation}\label{duhamel}
u(t) = e^{it\Delta}u_0 - i \int_0^t e^{i(t-s)\Delta}F(u(s))\,ds,
\end{equation}
where $u_0=u|_{t=0}$.

For the construction of the scattering map, we use the analogous
\begin{equation}\label{duhamel-scatter}
u(t) = e^{it\Delta}u_- - i \int_{-\infty}^t e^{i(t-s)\Delta}F(u(s))\,ds.
\end{equation}

\begin{proof}[Proof of Theorem~\ref{T:scattering}] We begin with the construction of the solution.  All space-time norms will be taken over $\R\times\R^2$, unless indicated otherwise.

We will show that the map
\[
u\mapsto \Phi(u):= e^{it\Delta}u_0 - i\int_0^t e^{i(t-s)\Delta}F(u(s))\,ds
\]
is a contraction on the complete metric space $(Z,d)$, whenever $u_0\in B_\eta$ for $\eta$ sufficiently small (cf. \eqref{ball}). Here, 
\begin{align*}
Z:=\Bigl\{u:\R\times\R^2\to\C:\ \|u\|_X \leq 4C\|u_0\|_{\dot H^{s_p}},\quad &\|u\|_{L_t^\infty L_x^2 \cap L_t^3 L_x^6} \leq 4C\|u_0\|_{L^2}, \\
& \|\nabla u\|_{L_t^\infty L_x^2 \cap L_t^3 L_x^6}\leq 4C\|\nabla u_0\|_{L^2}\Bigr\},
\end{align*}
with
\[
\|u\|_X = \|u\|_{L_t^{3p/2}L_x^{3p}} + \| |\nabla|^{s_p} u\|_{L_t^3 L_x^6},
\]
and
\[
d(u,v) := \|u-v\|_{L_t^3 L_x^6}. 
\]
The constant $C>0$ in the definition of $Z$ is universal and encodes implicit constants appearing in the Sobolev embedding and Strichartz inequalities below. 

Using Sobolev embedding, Strichartz estimates, H\"older's inequality, the fractional chain rule, and the properties of $F$, for $u\in Z$ we estimate
\begin{align*}
\|\Phi(u)\|_X & \lesssim \|u_0\|_{\dot H^{s_p}} + \||\nabla|^{s_p} F(u)\|_{L_t^1 L_x^2} \\
& \lesssim \|u_0\|_{\dot H^{s_p}}  + \bigl[|u\|_{L_t^{3}L_x^{6}}^2+\|u\|_{L_t^{3p/2}L_x^{3p}}^p \bigr]\| |\nabla|^{s_p}u\|_{L_t^3 L_x^6}\\
& \lesssim \|u_0\|_{\dot H^{s_p}} + \bigl[\|u_0\|_{H^{s_p}}^{2}+\|u_0\|_{H^{s_p}}^{p}\bigr]\|u_0\|_{\dot H^{s_p}}.
\end{align*}
In particular, for $\eta$ sufficiently small we obtain
\[
\|\Phi(u)\|_X \leq 4C\|u_0\|_{\dot H^{s_p}}.
\]

Similarly,
\begin{align*}
\|\nabla \Phi(u)\|_{L_t^\infty L_x^2 \cap L_t^3 L_x^6} & \lesssim \|\nabla u_0\|_{L^2} + \|\nabla F(u)\|_{L_t^1 L_x^2} \\
& \lesssim \|\nabla u_0\|_{L^2} + \bigl[\|u\|_{L_t^{3}L_x^{6}}^2+\|u\|_{L_t^{3p/2}L_x^{3p}}^p \bigr]\|\nabla u\|_{L_t^3 L_x^6} \\
& \lesssim \|\nabla u_0\|_{L^2} + \bigl[\|u_0\|_{H^{s_p}}^{2}+\|u_0\|_{H^{s_p}}^p\bigr]\|\nabla u_0\|_{L^2},
\end{align*}
so that 
\[
\|\nabla \Phi(u)\|_{L_t^\infty L_x^2\cap L_t^3 L_x^6}\leq 4C\|\nabla u_0\|_{L^2},
\]
provided $\eta$ is chosen small enough. Parallel arguments show that
\[
\| \Phi(u)\|_{L_t^\infty L_x^2\cap L_t^3 L_x^6}\leq 4C\| u_0\|_{L^2},
\]
and so we conclude that $\Phi:Z\to Z$.

Next, for $u,v\in Z$ we may bound
\begin{align*}
\|\Phi&(u)-\Phi(v)\|_{L_t^3 L_x^6}  \\
&\lesssim \bigl[\|u\|_{L_t^{3}L_x^{6}}^2+\|u\|_{L_t^{3p/2}L_x^{3p}}^p +\|v\|_{L_t^{3}L_x^{6}}^2+\|v\|_{L_t^{3p/2}L_x^{3p}}^p\bigr]\|u-v\|_{L_t^3 L_x^6} \\
& \lesssim \bigl[\|u_0\|_{H^{s_p}}^2 + \|u_0\|_{H^{s_p}}^p\bigr]\|u-v\|_{L_t^3 L_x^6},
\end{align*}
which shows that $\Phi$ is a contraction if $\eta$ is sufficiently small.  

By the Banach fixed point theorem, we deduce that $\Phi$ has a unique fixed point in $Z$, which yields the desired solution $u$ to \eqref{nls} satisfying the bounds \eqref{scattering-bounds}. 

We next construct the asymptotic states $u_\pm$.  By time reversal symmetry, it suffices to establish scattering forward in time.  To this end, we fix $t>s>0$ and estimate as above to obtain
\begin{align*}
\|&e^{-it\Delta}u(t)-e^{-is\Delta}u(s)\|_{L_t^\infty H_x^1} \\
& \lesssim \bigl[\|u\|_{L_t^{3}L_x^{6}((s,t)\times\R^2)}^2+\|u\|_{L_t^{3p/2}L_x^{3p}((s,t)\times\R^2)}^p\bigr]\|\langle\nabla\rangle u\|_{L_t^3 L_x^6((s,t)\times\R^2)}, 
\end{align*}
which converges to zero as $s,t\to\infty$ by \eqref{scattering-bounds} and the monotone convergence theorem.  It follows that $\{e^{-it\Delta}u(t)\}$ is Cauchy in $H^1$ as $t\to\infty$ and so converges to a unique limit $u_+\in H^1$.

The construction of the full scattering map is completely analogous, using \eqref{duhamel-scatter} instead of \eqref{duhamel} to construct the solution. \end{proof}

\begin{remark} The Duhamel formula \eqref{duhamel} for $u$ shows that the wave and scattering operators satisfy
\begin{equation}\label{form-of-S}
\begin{aligned}
\Omega_F(u_0)&=u_0-i\int_0^{\infty} e^{-it\Delta}F(u(t))\,dt,\\
S_F(u_-)& = u_- - i\int_{-\infty}^\infty e^{-it\Delta}F(u(t))\,dt.
\end{aligned}
\end{equation}
\end{remark}

\section{Reduction to an inverse convolution problem}\label{S:proof}

Throughout this section, we use the notation
\begin{equation}\label{def:G}
G(|u|^2) := F(u)\bar u,
\end{equation}
where $F$ is an admissible nonlinearity, and we consider Gaussian initial data of the form
\begin{equation}\label{gaussian}
u_0^\sigma(x)=Ae^{-|x|^2/4\sigma^2}\qtq{with} A,\sigma>0. \end{equation}

The majority of this section is devoted to the proof of Proposition~\ref{C:12}, which reduces the proof of Theorem~\ref{T} to the consideration of an inverse convolution problem.  The resolution of this convolution problem is stated as Theorem~\ref{P:key} below.  Using Proposition~\ref{C:12} and Theorem~\ref{P:key}, we complete the proof of Theorem~\ref{T} at the end of this section. The proof of Theorem~\ref{P:key} will then be given in Section~\ref{S:key}. 

\begin{lemma}\label{L:1} Let $F,\tilde F$ be admissible and let $\Omega_F,S_F:B\to H^1$ and $\Omega_{\tilde F},S_{\tilde F}:\tilde B\to H^1$ be the corresponding wave and scattering operators. Let $u_0^\sigma$ be as in \eqref{gaussian}.  Then 
\begin{itemize}
\item[(i)] $u_0^\sigma\in B\cap \tilde B$ for  sufficiently small $\sigma$.
\item[(ii)] If $\Omega_F(u_0^\sigma)=\Omega_{\tilde F}(u_0^\sigma)$, then 
\begin{equation}\label{G-id}
\biggl| \int_0^\infty\int_{\R^2} G(|e^{it\Delta}u_0^\sigma|^2)-\tilde G(|e^{it\Delta}u_0^\sigma|^2)\,dx\,dt\biggr| \lesssim_A\sigma^6.
\end{equation}
\item[(iii)] If $S_F(u_0^\sigma)=S_{\tilde F}(u_0^\sigma)$, then \eqref{G-id} holds with the time integral taken over $\R$. 
\end{itemize}
\end{lemma}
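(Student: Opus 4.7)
For (i), a direct dilation computation yields $\|u_0^\sigma\|_{L^2}\lesssim_A\sigma$ and $\|u_0^\sigma\|_{\dot H^{s_p}}\lesssim_A\sigma^{2/p}$, so $\|u_0^\sigma\|_{H^{s_p}}\to 0$ as $\sigma\to 0^+$ and hence $u_0^\sigma\in B\cap\tilde B$ for all sufficiently small $\sigma$.

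For (ii), I plan to exploit the Born-approximation structure of the wave operator. Let $u,\tilde u$ be the solutions of \eqref{nls} with nonlinearities $F,\tilde F$ and common initial data $u_0^\sigma$, and set $v(t):=e^{it\Delta}u_0^\sigma$. By \eqref{form-of-S}, the hypothesis $\Omega_F(u_0^\sigma)=\Omega_{\tilde F}(u_0^\sigma)$ rewrites as
\[
\int_0^\infty e^{-it\Delta}\bigl[F(u(t))-\tilde F(\tilde u(t))\bigr]\,dt=0;
\]
pairing with $u_0^\sigma$ in $L^2$ and transferring $e^{it\Delta}$ to the first slot produces
\[
\int_0^\infty\bigl\langle v(t),\,F(u(t))-\tilde F(\tilde u(t))\bigr\rangle\,dt=0.
\]
Since $\langle v,F(v)\rangle=\int G(|v|^2)\,dx$, the left-hand side of \eqref{G-id} equals $\int_0^\infty\langle v,F(v)-\tilde F(v)\rangle\,dt$, and the telescoping
\[
F(v)-\tilde F(v)=\bigl[F(v)-F(u)\bigr]+\bigl[F(u)-\tilde F(\tilde u)\bigr]+\bigl[\tilde F(\tilde u)-\tilde F(v)\bigr]
\]
annihilates the middle summand and leaves two nonlinear-replacement errors to control.

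The main technical task is then to show each such error is $O_A(\sigma^6)$. Setting $r:=u-v$, the dual Strichartz estimate applied to the Duhamel representation of $u$ gives
\[
\|r\|_{L^\infty_t L^2_x}\lesssim\|F(u)\|_{L^1_t L^2_x}\lesssim_A\sigma^3,
\]
bootstrapping off the free-solution Strichartz bounds $\|v\|_{L^3_tL^6_x}\lesssim_A\sigma$ and $\|v\|_{L^{3p/2}_tL^{3p}_x}\lesssim_A\sigma^{2/p}$ (which transfer to $u$ via Theorem~\ref{T:scattering}), exactly as in the contraction argument of Section~\ref{S:SDS}. The pointwise estimate
\[
|F(u)-F(v)|\lesssim\bigl(|u|^2+|v|^2+|u|^p+|v|^p\bigr)|r|,
\]
consequent to the admissibility bounds on $F_z,F_{\bar z}$, reduces each error integral to a space--time integral of one copy of $v$, two or $p$ copies of either $u$ or $v$, and one copy of $r$; placing each $v$- or $u$-factor in $L^3_tL^6_x$ (for the cubic piece) or $L^{3p/2}_tL^{3p}_x$ (for the $p$th-power piece) and placing $r$ in $L^\infty_tL^2_x$ closes Hölder and yields the target $\sigma\cdot\sigma^2\cdot\sigma^3=\sigma^6$ in both cases. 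The analogous bound with $\tilde r:=\tilde u-v$ handles the $\tilde F$ error. No step is conceptually difficult; the only thing worth checking is that the Hölder exponents line up so the $\sigma$ powers combine to exactly six.

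For (iii), the scattering formula \eqref{form-of-S} for $S_F$ produces the same cancellation identity but with the time interval $\R$ in place of $[0,\infty)$, since now $u_0^\sigma$ plays the role of the incoming asymptotic state $u_-$ and the relevant solution is the one produced by the second half of Theorem~\ref{T:scattering}. Every Strichartz and Hölder estimate above is global in time, so the proof of (ii) transfers verbatim and yields \eqref{G-id} with the time integral over $\R$.
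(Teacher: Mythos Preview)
Your proposal is correct and follows essentially the same approach as the paper's proof: both use the Duhamel/wave-operator formula \eqref{form-of-S}, pair with $u_0^\sigma$, split off the linear-evolution main term, and bound the remaining $F(u)-F(v)$ and $\tilde F(\tilde u)-\tilde F(v)$ errors by H\"older with $v,u\in L^3_tL^6_x$, $u\in L^{3p/2}_tL^{3p}_x$, and $r=u-v\in L^\infty_tL^2_x$ controlled via Strichartz applied to the Duhamel tail. The paper packages the chain of inequalities into a single display, but the placement of factors and the resulting $\sigma$-count are identical to yours.
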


\begin{proof} Item (i) follows from the fact that for any $s\geq 0$, 
\begin{equation}\label{u0-bd}
\|u_0^\sigma\|_{\dot H^s(\R^2)} \lesssim_s A \sigma^{1-s},
\end{equation}
along with the fact that $B,\tilde B$ are of the form \eqref{ball}. 

We turn to (ii).  Writing $u,\tilde u$ for the solutions to \eqref{nls} with nonlinearities $F,\tilde F$ and initial data $u_0^\sigma$, we use \eqref{form-of-S} to write
\begin{align*}
\langle \Omega_F(u_0^\sigma)-\Omega_{\tilde F}(u_0^\sigma),u_0^\sigma\rangle & = -i\int_0^\infty \langle e^{-it\Delta}[F(u(t))-\tilde F(\tilde u(t))],u_0^\sigma\rangle\,dt \\
& = -i\int_0^\infty\int_{\R^2} \bigl[G(|e^{it\Delta}u_0^\sigma|^2)-\tilde G(|e^{it\Delta}u_0^\sigma|^2)\bigr]\,dx\,dt \\
& \quad -i\int_0^\infty \langle F( u(t))-F(e^{it\Delta}u_0^\sigma),e^{it\Delta}u_0^\sigma\rangle\,dt \\
& \quad +i\int_0^\infty \langle \tilde F(\tilde u(t)) - \tilde F(e^{it\Delta}u_0^\sigma),e^{it\Delta}u_0^\sigma\rangle\,dt.
\end{align*}
By assumption, $\Omega_F(u_0^\sigma)=\Omega_{\tilde F}(u_0^\sigma)$, so item (ii) will follow once we prove that
\begin{equation}\label{bound-error}
\biggl|\int_0^\infty \langle F( u(t))-F(e^{it\Delta}u_0^\sigma),e^{it\Delta}u_0^\sigma\rangle\,dt\biggr| \lesssim_A \sigma^6
\end{equation}
for any admissible $F$ (including $\tilde F$). 

To establish \eqref{bound-error}, we let
\[
N(t) := u(t)-e^{it\Delta}u_0^\sigma = -i\int_0^\infty e^{i(t-s)\Delta}F(u(s))\,ds
\]
and let $p$ denote the growth parameter of $F$.  Using Strichartz, Sobolev embedding, and \eqref{scattering-bounds}, we may bound
\begin{align*}
\biggl| \int_0^\infty &\langle F(u(t))-F(e^{it\Delta}u_0^\sigma),e^{it\Delta}u_0^\sigma\rangle\,dt \biggr| \\
& \lesssim \|e^{it\Delta} u_0^\sigma\|_{L_t^3 L_x^6}\| F(u(t))-F(e^{it\Delta}u_0^\sigma)\|_{L_t^{3/2}L_x^{6/5}}  \\
& \lesssim \|u_0^\sigma\|_{L^2}\| N(t)\|_{L_t^\infty L_x^2}\\
& \quad\times\bigl[ \|u^2\|_{L_t^{3/2}L_x^3} + \|[e^{it\Delta}u_0^\sigma]^2\|_{L_t^{3/2}L_x^3}+\|u^p\|_{L_t^{3/2}L_x^3} + \|[e^{it\Delta}u_0^\sigma]^p\|_{L_t^{3/2}L_x^3}\bigr] \\
& \lesssim \|u_0^\sigma\|_{L^2}\bigl[ \|u^{3}\|_{L_t^1 L_x^2}+\| u^{p+1}\|_{L_t^1 L_x^2}\bigr]\\
&\quad \times \bigl[\|u\|_{L_t^{3}L_x^{6}}^2+\|e^{it\Delta}u_0^\sigma\|_{L_t^{3}L_x^{6}}^2+ \|u\|_{L_t^{3p/2}L_x^{3p}}^p+\|e^{it\Delta}u_0^\sigma\|_{L_t^{3p/2}L_x^{3p}}^p  \bigr] \\
& \lesssim \|u_0^\sigma\|_{L^2}\|u\|_{L_t^3 L_x^6}\bigl[\|u\|_{L_t^{3}L_x^{6}}^2+\|u\|_{L_t^{3p/2}L_x^{3p}}^p \bigr] \bigl[ \|u_0^\sigma\|_{L^2}^2+\|u_0^\sigma\|_{\dot H^{1-2/p}}^p \bigr]\\
& \lesssim \|u_0^\sigma\|_{L^2}^2\bigl[ \|u_0^\sigma\|_{L^2}^2+\|u_0^\sigma\|_{\dot H^{1-2/p}}^p\bigr]^2,
\end{align*}
where we used \eqref{scattering-bounds} in the last two steps.  The estimate \eqref{bound-error} now follows from \eqref{u0-bd}.

Part (iii) follows from a direct recapitulation of the proof of (ii), using the second formula in \eqref{form-of-S}. \end{proof}

\begin{lemma}\label{L:2} Let $F$ be an admissible nonlinearity and let $u_0^\sigma$ be as in \eqref{gaussian}.  Define $G$ as in \eqref{def:G} and let
\begin{equation}\label{def:H}
H(k):=G'(e^{-k})e^{-k}.
\end{equation}
Then
\begin{equation}\label{G-identity}
\int_0^\infty \int_{\R^2} G(|e^{it\Delta}u_0^\sigma|^2)\,dx\,dt = \tfrac{4\pi}{9}\sigma^4\int_\R H(k) w(k+2\log A)\,dk,
\end{equation}
where
\begin{equation}\label{def:w1}
w(k):= \bigl[(e^k-1)^{\frac32}+6(e^k-1)^{\frac12}-6\atan\bigl((e^k-1)^{\frac12}\bigr)\bigr]\chi_{(0,\infty)}(k). 
\end{equation}
If the time integral in \eqref{G-identity} is extended to all of $\R$, the right-hand side doubles. 
\end{lemma}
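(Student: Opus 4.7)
The plan is to substitute the explicit Gaussian solution of the free Schrödinger equation, reduce the spatial integral to a one-dimensional integral via the level-set/layer-cake change of variables, swap the order with the time integral, and then integrate by parts once to replace $G$ with $G'$ (equivalently $H$). The weight $w$ will then fall out as an explicit antiderivative.

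First I would write down the explicit formula for the linear Schrödinger evolution of a centered Gaussian in two dimensions, namely
\[
e^{it\Delta}u_0^\sigma(x) = \tfrac{A\sigma^2}{\sigma^2+it}\,\exp\bigl(-\tfrac{|x|^2}{4(\sigma^2+it)}\bigr),
\]
which one verifies by direct differentiation (or via the Fourier transform). Taking the modulus squared gives
\[
|e^{it\Delta}u_0^\sigma(x)|^2 = M(t)\,e^{-|x|^2/(2\rho(t)^2)}, \quad M(t):=\tfrac{A^2\sigma^2}{\rho(t)^2},\quad \rho(t)^2:=\tfrac{\sigma^4+t^2}{\sigma^2}.
\]
For the spatial integral I would switch to polar coordinates and use the substitution $s=|x|^2/(2\rho(t)^2)$ to obtain
\[
\int_{\R^2}G(|e^{it\Delta}u_0^\sigma|^2)\,dx = 2\pi\rho(t)^2\int_0^\infty G(M(t)e^{-s})\,ds,
\]
and then set $k=s-\log M(t)$, so that the inner integral becomes $\int_{-\log M(t)}^\infty G(e^{-k})\,dk$. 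With the rescaling $\tau=t/\sigma^2$, the lower limit becomes $\log(1+\tau^2)-2\log A$.

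The next step is to apply Fubini to the resulting double integral
\[
I = 2\pi\sigma^4\int_0^\infty(1+\tau^2)\!\!\int_{\log(1+\tau^2)-2\log A}^\infty G(e^{-k})\,dk\,d\tau.
\]
The region of integration, after interchange, is $\{(\tau,k):0<\tau<\sqrt{e^{k+2\log A}-1}\}$ and is nonempty precisely when $k+2\log A>0$. Performing the inner $\tau$-integral yields a factor $T+\tfrac{1}{3}T^3$ where $T=\sqrt{e^{k+2\log A}-1}$, so
\[
I = 2\pi\sigma^4\!\int_{-2\log A}^\infty G(e^{-k})\bigl[T(k)+\tfrac{1}{3}T(k)^3\bigr]\,dk.
\]
Now I would integrate by parts in $k$, using $\tfrac{d}{dk}G(e^{-k})=-H(k)$, taking $V(k)$ to be the antiderivative of $T+\tfrac{1}{3}T^3$ that vanishes at $k=-2\log A$. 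The boundary term at $k=-2\log A$ vanishes by construction, and at $k=\infty$ it vanishes because $G(\lambda)\lesssim\lambda^2+\lambda^{p/2+1}$ (from admissibility) forces $G(e^{-k})$ to decay much faster than $V(k)$ grows. This yields
\[
I = 2\pi\sigma^4\int_\R H(k)\,V(k)\,dk,
\]
with $V$ extended by zero to $k\leq -2\log A$.

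The final step is the identification $V(k)=\tfrac{2}{9}\,w(k+2\log A)$, which gives the desired constant $\tfrac{4\pi}{9}\sigma^4$. I would verify this by differentiating both sides in $\ell=k+2\log A$: a short calculation gives
\[
\tfrac{d}{d\ell}\bigl[\tfrac{2}{9}w(\ell)\bigr] = \tfrac{1}{3}(e^\ell-1)^{1/2}(2+e^\ell) = \tfrac{dV}{d\ell},
\]
where the arctan term in $w$ is exactly what is needed to cancel the constant contribution from $\tfrac{d}{d\ell}(e^\ell-1)^{1/2}$ (which blows up like $(e^\ell-1)^{-1/2}$); together with the common vanishing at $\ell=0$, this identifies $V$ with $\tfrac{2}{9}w(\cdot+2\log A)$. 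The statement for the integral over all of $\R$ follows immediately from the identity $|e^{-it\Delta}u_0^\sigma|^2=|e^{it\Delta}u_0^\sigma|^2$, which holds because $u_0^\sigma$ is real, so the integrand is even in $t$.

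The only mildly delicate point in the argument is the arctan term in the definition of $w$: without it, the natural antiderivatives of $(e^\ell-1)^{\pm1/2}$ do not combine to the primitive of $T+\tfrac{1}{3}T^3$, and one has to recognize that the correct antiderivative of $(e^\ell-1)^{-1/2}e^\ell$ up to an $(e^\ell-1)^{1/2}$ piece is precisely $2\arctan((e^\ell-1)^{1/2})$. Everything else is essentially bookkeeping with changes of variables and Fubini, both of which are easily justified by the admissibility bounds on $G$.
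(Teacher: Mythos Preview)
Your argument is correct and arrives at the same identity as the paper, but the route differs in one instructive way. The paper opens with the layer-cake decomposition, writing the space--time integral as $\int_0^\infty G'(\lambda)\,|\{(t,x):|e^{it\Delta}u_0|^2>\lambda\}|\,d\lambda$, computes the superlevel-set measure directly from the Gaussian formula, and then integrates by parts in the \emph{time} variable $t$ to produce the weight $w_0(\lambda)$; the substitution $\lambda=e^{-k}$ comes last. You instead perform the spatial integral and the substitution to $k$ first, keep $G$ intact, apply Fubini, and only at the end integrate by parts in $k$ to trade $G$ for $H$, identifying the emerging antiderivative with $\tfrac{2}{9}w$. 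Both orderings are equally valid; the paper's version makes the geometric interpretation of $w$ (as a superlevel-set measure) transparent, which is used later to argue positivity in Theorem~\ref{T:polynomial}, whereas your version is perhaps more mechanical and handles general $\sigma$ in one pass rather than by scaling. Your justification of the boundary term at $k\to\infty$ and of Fubini via the admissibility bounds on $G$ is fine.
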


\begin{proof} We first consider $\sigma=1$; for notational simplicity we write $u_0$ for $u_0^1$. Using the layer cake decomposition, we may write
\begin{align*}
\int_0^\infty&\int_{\R^2}G(|e^{it\Delta}u_0|^2)\,dx\,dt= \int_0^\infty G'(\lambda)\,\bigl|\{(t,x)\in(0,\infty)\times\R^2: |e^{it\Delta}u_0|^2>\lambda\}\bigr|\,d\lambda.
\end{align*}
By direct computation (see e.g. \cite[Equation~(2.4)]{Oberwolfach}), we have
\[
e^{it\Delta}u_0(x) = \tfrac{A}{1+it} \exp\bigl\{-\tfrac{|x|^2}{4(1+it)}\bigr\}.
\]
In order that $|e^{it\Delta}u_0(x)|^2>\lambda$ it is necessary that $\lambda<A^2$, in which case the inequality holds at space-time points where
\[
\lambda(1+t^2)<A^2 \qtq{and} |x|< \bigl[2(1+t^2)\log\bigl(\tfrac{A^2}{\lambda(1+t^2)}\bigr)\bigr]^{\frac{1}{2}}.
\]
Thus, integrating by parts, we find that
\begin{align*}
\int_0^\infty\int_{\R^2} G(|e^{it\Delta}u_0|^2)\,dx\,dt  &= \pi\int_0^{A^2} G'(\lambda)\int_0^{[A^2\lambda^{-1}-1]^{\frac12}}2(1+t^2)\log\bigl(\tfrac{A^2}{\lambda(1+t^2)}\bigr)\,dt \,d\lambda \\
& = \pi\int_0^{A^2}G'(\lambda)\int_0^{[A^2\lambda^{-1}-1]^{\frac12}}(t+\tfrac13t^3)\tfrac{4t}{1+t^2}\,dt\,d\lambda \\
& = \tfrac{4\pi}{9}\int_0^{A^2} G'(\lambda)w_0(\tfrac{\lambda}{A^2})\,d\lambda,
\end{align*}
where
\[
w_0(\lambda):=(\lambda^{-1}-1)^{\frac32}+6(\lambda^{-1}-1)^{\frac12}-6\atan\bigl((\lambda^{-1}-1)^{\frac12}\bigr). 
\]
Applying the change of variables $\lambda = e^{-k}$, we obtain 
\begin{align*}
\int_0^\infty \int_{\R^2} G(|e^{it\Delta}u_0|^2)\,dx\,dt  & = \tfrac{4\pi}{9}\int_{-2\log A}^\infty G'(e^{-k})e^{-k} w_0(e^{-k-2\log A}) \,dk \\
& = \tfrac{4\pi}{9}\int_{\R} H(k) w(k+2\log A)\,dk,
\end{align*}
where $H(\cdot)$ and $w(\cdot)$ are as in \eqref{def:H} and \eqref{def:w1}. This yields \eqref{G-identity} with $\sigma=1$.

To treat the case of general $\sigma>0$, we note that $u_0^\sigma(x)=u_0^1(\sigma^{-1}x)$, which implies
\[
e^{it\Delta}u_0^\sigma(x)= [e^{i\sigma^{-2}t\Delta}u_0^1](\sigma^{-1}x). 
\]
Thus, by a change of variables,
\[
\int_0^\infty \int_{\R^2} G(|e^{it\Delta}u_0^\sigma|^2)\,dx\,dt = \sigma^4 \int_0^\infty \int_{\R^2} G(|e^{it\Delta}u_0^1|^2)\,dx\,dt,
\]
which yields \eqref{G-identity}.

The final claim of the lemma follows by repeating the previous argument or, more simply, by exploiting time-reversal symmetry. \end{proof}

With Lemmas~\ref{L:1} and~\ref{L:2} in place, we are now in a position to reduce the proof of Theorem~\ref{T} to the consideration of an inverse convolution problem.

\begin{proposition}\label{C:12} Let $F,\tilde F$ be admissible and let $\Omega_F,S_F:B\to H^1$ and $\Omega_{\tilde F},S_{\tilde F}:\tilde B\to H^1$ be the corresponding wave and scattering operators. Define $H,\tilde H$ as in \eqref{def:H} and $w$ as in \eqref{def:w1}.  
If $\Omega_F=\Omega_{\tilde F}$ or $S_F=S_{\tilde F}$ on $B\cap\tilde B$, then
\[
\int_\R [H(k)-\tilde H(k)]w(k+\ell)\,dk = 0 \qtq{for all}\ell\in\R. 
\]
\end{proposition}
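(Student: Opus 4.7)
The plan is to combine Lemmas~\ref{L:1} and~\ref{L:2} directly, exploiting the scaling discrepancy between the error term (which is $O(\sigma^6)$) and the main term (which is $O(\sigma^4)$).

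Concretely, I would fix $A>0$ and choose $\sigma$ small enough that $u_0^\sigma\in B\cap\tilde B$, as permitted by Lemma~\ref{L:1}(i). Writing the identity from Lemma~\ref{L:2} for both $F$ and $\tilde F$ with the same $u_0^\sigma$ and subtracting gives
\[
\int_0^\infty\!\!\int_{\R^2}[G(|e^{it\Delta}u_0^\sigma|^2)-\tilde G(|e^{it\Delta}u_0^\sigma|^2)]\,dx\,dt = \tfrac{4\pi}{9}\sigma^4\int_\R[H(k)-\tilde H(k)]\,w(k+2\log A)\,dk.
\]
If $\Omega_F=\Omega_{\tilde F}$ on $B\cap\tilde B$, then by hypothesis $\Omega_F(u_0^\sigma)=\Omega_{\tilde F}(u_0^\sigma)$, so Lemma~\ref{L:1}(ii) bounds the left-hand side by $C_A\sigma^6$, where $C_A$ is independent of $\sigma$. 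Dividing by $\tfrac{4\pi}{9}\sigma^4$ and letting $\sigma\to 0^+$, the right-hand integrand does not depend on $\sigma$, so
\[
\int_\R[H(k)-\tilde H(k)]\,w(k+2\log A)\,dk=0.
\]

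Since $A>0$ was arbitrary and the map $A\mapsto 2\log A$ is a bijection from $(0,\infty)$ onto $\R$, setting $\ell=2\log A$ yields the claim for every $\ell\in\R$. The scattering case is identical: Lemma~\ref{L:1}(iii) provides the same $\sigma^6$ error bound, and the last sentence of Lemma~\ref{L:2} tells us that extending the time integral to $\R$ merely doubles the main term, so the same conclusion follows after dividing by $\tfrac{8\pi}{9}\sigma^4$ instead.

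There is really no serious obstacle: the two preceding lemmas have been set up so that the main term scales as $\sigma^4$ while the nonlinear error scales as $\sigma^6$, so the small-$\sigma$ limit isolates the Born approximation exactly. The only minor point to verify is that the constant $C_A$ in Lemma~\ref{L:1}(ii) is genuinely $\sigma$-independent for $\sigma$ small (which it is, since the proof of that lemma bounded everything through $\|u_0^\sigma\|_{L^2}\lesssim A\sigma$ and $\|u_0^\sigma\|_{\dot H^{s_p}}\lesssim_A \sigma^{2/p}$, allowing $\sigma\to 0$ along any sequence with $A$ fixed). Noting that the shift parameter $\ell=2\log A$ traverses all of $\R$ as $A$ traverses $(0,\infty)$ then delivers Proposition~\ref{C:12}.
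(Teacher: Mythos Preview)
Your argument is correct and is essentially identical to the paper's own proof: fix $A$, invoke Lemma~\ref{L:2} to write the main term as $\tfrac{4\pi}{9}\sigma^4$ times the $\sigma$-independent convolution integral, use Lemma~\ref{L:1} to bound it by $C_A\sigma^6$, divide by $\sigma^4$ and send $\sigma\to0$, then let $\ell=2\log A$ range over $\R$. The paper handles the scattering case with the same one-line remark you give.
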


\begin{proof} It suffices to consider the case $\Omega_F=\Omega_{\tilde F}$.  The case $S_F=S_{\tilde F}$ follows in an identical fashion.

Fix $A>0$ and define $u_0^\sigma$ as in \eqref{gaussian}. Combining Lemmas~\ref{L:1} and~\ref{L:2}, we find that for all sufficiently small $\sigma$ we have 
\begin{align*}
\biggl|\tfrac{4\pi}{9}\int_\R& [H(k)-\tilde H(k)]w(k+2\log A)\,dk \biggr| \\
& = \sigma^{-4}\biggl| \int_0^\infty \int_{\R^2} G(|e^{it\Delta}u_0^\sigma|^2)-\tilde G(|e^{it\Delta}u_0^\sigma|^2)\,dx\,dt \biggr| \lesssim_A \sigma^2.
\end{align*}
As the left-hand side is independent of $\sigma$, the result follows by sending $\sigma\to0$. 
\end{proof}

The last ingredient in the proof of Theorem~\ref{T} is the solution of this inverse convolution problem.

\begin{theorem}\label{P:key} Let $F,\tilde F$ be admissible.  Define $H,\tilde H$ as in \eqref{def:H} and $w$ as in \eqref{def:w1}.  Given $\ell_0\in\R$, if
\begin{equation}\label{orthog-l0}
\int_\R [H(k)-\tilde H(k)]w(k+\ell)\,dk = 0 \qtq{for all}\ell\leq\ell_0,
\end{equation}
then $F(u)=\tilde F(u)$ for all $|u|\leq e^{\frac12\ell_0}$.
\end{theorem}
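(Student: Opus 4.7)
\medskip
\noindent\textbf{Proof plan.} The plan is to recast the one-sided vanishing of the convolution as a cyclic-vector/orthogonality question in a Hardy space, and then to apply the Beurling--Lax theorem, with outerness of the Laplace transform of $w$ as the key analytic input. After a shift of the $k$-variable (absorbing $\ell_0$ into the weight by replacing $w$ with $w(\cdot+\ell_0)$) we may assume $\ell_0=0$. To place the problem in an $L^2$ framework, fix $\alpha\in(\tfrac32,2)$ and set
\[
v(k):=e^{-\alpha k}w(k),\qquad u(m):=e^{-\alpha m}\bigl[H(-m)-\tilde H(-m)\bigr]\chi_{(-\infty,0]}(m).
\]
Admissibility gives $|H(k)|,|\tilde H(k)|\lesssim e^{-2k}$ as $k\to+\infty$ (since $G'(\lambda)\lesssim\lambda$ near $\lambda=0$), while \eqref{def:w1} gives $w(k)\sim e^{3k/2}$ at $+\infty$ and $w(k)\sim 3k^{3/2}$ near $0$. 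Hence $v\in L^1\cap L^2([0,\infty))$ and $u\in L^2((-\infty,0])$, and a change of variables turns the hypothesis \eqref{orthog-l0} into the one-sided convolution identity $(v*u)(\ell)=0$ for every $\ell\le 0$.

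\medskip
\noindent Next I would pass to the Fourier side. By Paley--Wiener, $\hat v\in H^2(\C^-)$ and $\hat u\in H^2(\C^+)$; since $v\in L^1$, convolution by $v$ preserves $L^2(\R)$, so the vanishing of $v*u$ on $(-\infty,0]$ places $v*u$ in $L^2([0,\infty))$ and hence $\hat u\hat v\in H^2(\C^-)$. Equivalently, writing $v^\flat(x):=v(-x)\in L^2((-\infty,0])$, the hypothesis is that $u$ is orthogonal in $L^2(\R)$ to each translate $v^\flat(\cdot-\ell)$ with $\ell\le 0$. Let $\mathcal{M}$ be the closed linear span of these translates in $L^2((-\infty,0])$. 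On the Fourier side $\mathcal{M}$ is the smallest closed subspace of $H^2(\C^+)$ containing $\hat v(-\cdot)$ and invariant under the shift semigroup $\{e^{ia\xi}\,\cdot\,\}_{a\ge 0}$. By the Beurling--Lax theorem, $\mathcal{M}=\theta\cdot H^2(\C^+)$ for some inner $\theta$, namely the inner factor of $\hat v(-\cdot)$. Thus if $\hat v$ is outer in $H^2(\C^-)$ (equivalently, $\hat v(-\cdot)$ outer in $H^2(\C^+)$), then $\theta\equiv 1$ and $\mathcal{M}=L^2((-\infty,0])$; the orthogonality then forces $u\equiv 0$, which unwinds to $H=\tilde H$ on $[-\ell_0,\infty)$ and hence $F=\tilde F$ on $\{|u|\le e^{\ell_0/2}\}$.

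\medskip
\noindent The remaining and most substantial task, which I expect to be the main obstacle, is to prove that $\hat v(\xi)=(\L w)(\alpha+i\xi)$ is outer. The plan is to compute $\L w$ explicitly: direct differentiation of \eqref{def:w1} yields the pleasantly simple identity $w'(k)=\tfrac32(e^k-1)^{1/2}(e^k+2)$, after which integration by parts in $\L w$ (the boundary terms vanish because $w(0)=0$) and the substitution $\lambda=e^k-1$ recognize $\L w$ as a linear combination of two Beta integrals, producing a concrete rational combination of Gamma functions on $\{\Re s>\tfrac32\}$. From the explicit form one reads off that $\L w$ has no zeros in $\{\Re s>\alpha\}$: the rational prefactor has its only zero and pole at $s=1,0$, both outside, and $\Gamma(\cdot-\tfrac32)$ and $1/\Gamma(\cdot)$ are entire/nonvanishing. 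This rules out any Blaschke factor. The delicate point is to exclude a singular inner factor, which is invisible to zero counting; I would do this by isolating $\Gamma(s-\tfrac32)/\Gamma(s)$ as (a constant multiple of) $B(s-\tfrac32,\tfrac32)$ and realizing the latter as the Laplace transform of the positive integrable density $(1-e^{-k})^{1/2}e^{3k/2}\chi_{(0,\infty)}(k)$, then invoking the standard characterization of outer functions on a half-plane (log-modulus equals the Poisson extension of its boundary values) for such Laplace transforms, while observing that the remaining rational factor is outer by inspection. Outerness of $\L w$ follows from outerness of its factors.

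\medskip
\noindent The exponential growth mismatch between $H-\tilde H$ (blowing up as $k\to-\infty$) and $w$ (blowing up as $k\to+\infty$) is precisely what rules out a more elementary Wiener-Tauberian argument and makes Beurling--Lax essential; conversely, it is the explicit Gamma-function form of $\L w$ peculiar to two space dimensions that keeps the outerness verification within reach.
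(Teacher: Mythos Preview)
Your plan is essentially the paper's: an exponential weight (you take $\alpha\in(\tfrac32,2)$, the paper takes $\alpha=\tfrac74$) puts the problem into $L^2([0,\infty))$, Beurling--Lax reduces everything to showing that $z\mapsto W(z+\alpha)=\mathcal{L}[e^{-\alpha\cdot}w](z)$ is outer on $\{\Re z>0\}$, and the key input is an explicit Gamma-function formula for $W$. Your integration-by-parts computation via $w'(k)=\tfrac32(e^k-1)^{1/2}(e^k+2)$ indeed yields the same expression the paper obtains by direct Beta-integral evaluation.

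The one place your sketch is incomplete is the exclusion of a singular inner factor. You propose to write $W$ as a rational function times $B(s-\tfrac32,\tfrac32)$ and to argue that the Beta factor is outer because it is the Laplace transform of a positive density; but positivity of the density does not by itself rule out a singular inner factor, and ``invoking the standard characterization'' does not indicate how you would actually verify the Poisson-extension identity for $\log|V|$ from positivity alone. The paper's route here is both simpler and complete: from a Gamma-ratio estimate (Rademacher) one gets the two-sided bound $|W(z)|\approx|z|^{-5/2}$ on $\{\Re z>\tfrac74\}$, and then a short lemma shows that any $V\in\H^2$ satisfying $\eps|z+1|^{-\beta}\le|V(z)|\le\eps^{-1}|z|^{-1}$ is outer, since both $V\circ M$ and $1/(V\circ M)$ lie in $\H^q(\mathbb{D})$ for $q<1/\beta$. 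The same bound applies to your factorization and closes the gap.
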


We will prove Theorem~\ref{P:key} in the next section.  For now, let us see it implies Theorem~\ref{T}.

\begin{proof}[Proof of Theorem~\ref{T}] If $F,\tilde F$ are admissible and the scattering data agree on $B\cap \tilde B$, then Proposition~\ref{C:12} and Theorem~\ref{P:key} imply $F(u)=\tilde F(u)$ for all $u\in\C$.\end{proof}

\section{Proof of Theorem~\ref{P:key}}\label{S:key}

Our proof of Theorem~\ref{P:key} relies on the Beurling--Lax Theorem, which tells us when the span of the (right) translates of a function in $L^2([0,\infty))$ are dense in this space.  After discussing this theorem, we demonstrate that the necessary (and sufficient) condition is satisfied for the specific weight
\begin{equation}\label{def:w}
w(k):=\bigl[(e^k-1)^{\frac32}+6(e^k-1)^{\frac12}-6\atan\bigl((e^k-1)^{\frac12}\bigr)\bigr]\chi_{(0,\infty)}(k) 
\end{equation}
appearing in Theorem~\ref{P:key}; this allows us to complete the proof of Theorem~\ref{P:key}.

The following theorem, due to Lax \cite{MR0105620}, characterizes shift-invariant subspaces of the Hardy space $\H^2(\{\Re z>0\})$.  Here shift-invariance of a closed subspace $\mathcal{M}$ refers to the fact that $e^{-az}F(z)\in \mathcal{M}$ whenever $F\in \mathcal{M}$ and $a>0$.  The result relies on the inner/outer factorization on Hardy spaces; for a textbook presentation, see \cite[Chapter~5]{Hoffman}.

\begin{theorem}[Lax]\label{T:BL} If $\mathcal{M}$ is a closed, shift-invariant subspace of $\H^2$, then there exists an inner function $\theta$ such that $\mathcal{M}=\theta\H^2$. 
\end{theorem}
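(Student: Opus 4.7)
The plan is to reduce to Beurling's original theorem on the unit disk via conformal mapping. Let $\Phi:\mathbb{D}\to\{\Re z>0\}$ be the Cayley transform $\Phi(w)=(1-w)/(1+w)$, and define $U:\H^2(\mathbb{D})\to\H^2$ by
\[
(Ug)(z):=\sqrt{\tfrac{2}{\pi}}\,\tfrac{1}{1+z}\,g\bigl(\tfrac{1-z}{1+z}\bigr).
\]
The prefactor $\sqrt{2/\pi}\,(1+z)^{-1}$ accounts for the Jacobian of the boundary change of variables and makes $U$ a unitary isomorphism. A short computation using $1-\tfrac{1-w}{1+w}=\tfrac{2w}{1+w}$ and $1+\tfrac{1-w}{1+w}=\tfrac{2}{1+w}$ shows that, under $U$, the half-plane shift $F\mapsto e^{-az}F$ corresponds on the disk side to multiplication by the (singular) inner function
\[
\beta_a(w):=\exp\bigl(-a\tfrac{1-w}{1+w}\bigr), \qquad a>0.
\]
Pulling back, the hypothesis on $\M$ becomes: $\N:=U^{-1}\M$ is a closed subspace of $\H^2(\mathbb{D})$ with $\beta_a\N\subset\N$ for all $a>0$.

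Next I would promote the semigroup invariance to invariance under multiplication by the disk coordinate $w$, so that Beurling's disk theorem becomes applicable. Although no single $\beta_a$ equals $w$, the analytic Laplace-transform identity
\[
\int_0^\infty \beta_a(w)\,e^{-a}\,da = \frac{1}{1+\tfrac{1-w}{1+w}}=\frac{1+w}{2}
\]
(valid because $\tfrac{1-w}{1+w}$ has positive real part on $\mathbb{D}$) lets one realize $\tfrac{1+w}{2}f$ as a norm-convergent Bochner integral of elements $\beta_a f\in\N$ against the finite measure $e^{-a}\,da$; here $\|\beta_a f\|_{\H^2(\mathbb{D})}\le\|f\|_{\H^2(\mathbb{D})}$ since $|\beta_a|\le 1$ on $\mathbb{D}$. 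Closedness of $\N$ then yields $\tfrac{1+w}{2}\N\subset\N$, and hence $w\N\subset\N$.

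Finally I would run Beurling's classical argument on $\N$. Assuming $\N\neq\{0\}$, consider the wandering subspace $\mathcal{E}:=\N\ominus w\N$. For any unit vectors $f,g\in\mathcal{E}$, orthogonality to $w\N$ gives $\langle f,w^ng\rangle=\langle g,w^nf\rangle=0$ for all $n\ge1$, so on $\partial\mathbb{D}$ the boundary function $f\bar g$ has vanishing Fourier coefficients at every nonzero frequency and is therefore constant; specializing to $f=g$ forces $|f|\equiv\|f\|$ a.e., so $\theta_0:=f/\|f\|$ is inner, and then $g$ must be a unimodular multiple of $\theta_0$. Hence $\mathcal{E}=\C\theta_0$ is one-dimensional. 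Since $\N\subset\H^2(\mathbb{D})$ forces $\bigcap_n w^n\N=\{0\}$ (functions in the intersection vanish to infinite order at $0$), iterating the orthogonal decomposition $\N=\mathcal{E}\oplus w\N$ yields
\[
\N=\bigoplus_{n\ge 0}w^n\C\theta_0=\theta_0\,\H^2(\mathbb{D}).
\]
Pulling back through $U$ gives $\M=\theta\,\H^2$, where the half-plane inner function $\theta$ is the image of $\theta_0$ under $U$.

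The main obstacle is the middle step: upgrading continuous semigroup invariance to $M_w$-invariance on the disk. The Laplace-transform identity above makes this essentially a computation, but it depends on the analyticity of $\beta_a$ and on carefully justifying that the Bochner integral stays inside the closed subspace $\N$. An alternative route is to differentiate the semigroup at $a=0$ and invoke Hille--Yosida theory for the generator $-\tfrac{1-w}{1+w}$, but this generator is unbounded, which complicates the argument. Once $w\N\subset\N$ is in hand, the orthogonality computation defining $\mathcal{E}$ and the conformal transfer back to the half-plane are both routine.
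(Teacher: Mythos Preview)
The paper does not prove this theorem; it quotes it as a classical result of Lax and remarks that the half-plane case can be deduced from Beurling's disk theorem via conformal mapping, citing Hoffman's textbook. Your proposal follows exactly this conformal-transfer route and fills in the details that the paper omits.

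Your argument is essentially correct, including the nice Laplace-integral trick for upgrading invariance under the semigroup $\{\beta_a\}$ to invariance under $M_w$. One slip in the final line: the half-plane inner function $\theta$ is \emph{not} ``the image of $\theta_0$ under $U$.'' Applying $U$ would insert the non-unimodular prefactor $\sqrt{2/\pi}\,(1+z)^{-1}$, and the resulting function is not inner. What actually happens is that multiplication operators intertwine with composition: for $g\in\H^2(\mathbb{D})$ one has $U(\theta_0 g)(z)=\theta_0\bigl(\tfrac{1-z}{1+z}\bigr)\cdot(Ug)(z)$, so $U(\theta_0\H^2(\mathbb{D}))=\theta\,\H^2$ with $\theta(z):=\theta_0\bigl(\tfrac{1-z}{1+z}\bigr)$, which is genuinely inner on the half-plane. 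With this correction the proof goes through.
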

The analogous result for $\H^2(\mathbb{D})$ was established by Beurling \cite{MR0027954}; in fact, the half-plane case can be deduced from the disk case via conformal mapping, as demonstrated in \cite{Hoffman}.

We will use Theorem~\ref{T:BL} to prove the following corollary.  

\begin{corollary}\label{C:BL} Fix $v\in L^2(\R)$ with $\supp(v)\subseteq [0,\infty)$. Suppose the Laplace transform
\begin{align}\label{Big F}
V(z)= \mathcal{L}v(z) :=\int_0^\infty e^{-kz} v(k)\,dk
\end{align}
defines an outer function on the half-plane $\{\Re z >0\}$.  If $f\in L^2([0,\infty))$ satisfies
\begin{equation}\label{BL-orthog}
\int_0^\infty v(k-a)f(k)\,dk = 0 \qtq{for all}a\geq 0,
\end{equation}
then $f\equiv 0$.
\end{corollary}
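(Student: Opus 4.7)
The plan is to recognize the hypothesis \eqref{BL-orthog} as saying that $f$ is orthogonal (in $L^2([0,\infty))$) to every right-translate $v(\,\cdot - a)$, $a\geq 0$. Since $v$ is supported in $[0,\infty)$, each such translate indeed lies in $L^2([0,\infty))$. Thus the corollary will follow once one shows that
\[
\mathcal{M} := \overline{\mathrm{span}\{v(\,\cdot - a):a\geq 0\}} \;=\; L^2([0,\infty)).
\]

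To establish this density, I would transport the problem to the half-plane via the Paley--Wiener identification: the Laplace transform $\mathcal{L}$ is (up to a fixed multiplicative constant) an isometric isomorphism $L^2([0,\infty))\to \H^2(\{\Re z>0\})$. A direct change of variables gives $\mathcal{L}[v(\,\cdot - a)](z) = e^{-az}V(z)$ for $a\geq 0$, so $\widehat{\mathcal{M}}:=\mathcal{L}(\mathcal{M})$ is the closure in $\H^2$ of $\{e^{-az}V(z):a\geq 0\}$. This is a closed, shift-invariant subspace of $\H^2(\{\Re z>0\})$, which is precisely the setup of Theorem~\ref{T:BL}.

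By Beurling--Lax, $\widehat{\mathcal{M}} = \theta\, \H^2$ for some inner function $\theta$. Since $V\in\widehat{\mathcal{M}}$, we may write $V=\theta G$ with $G\in\H^2$. Factoring $G=\theta_G G_o$ into its own inner and outer parts yields $V=(\theta\theta_G)G_o$, an inner--outer factorization of $V$. Uniqueness of this factorization together with the hypothesis that $V$ is outer forces $\theta\theta_G$ to be a unimodular constant; since $\theta$ is itself inner, this requires $\theta$ to be constant. Therefore $\widehat{\mathcal{M}}=\H^2$, which pulls back to $\mathcal{M}=L^2([0,\infty))$, and the orthogonality \eqref{BL-orthog} then yields $f\equiv 0$.

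The argument is essentially a bookkeeping exercise once Beurling--Lax is available. There is no serious analytic obstacle: the only subtlety is verifying that the Laplace transform correctly intertwines the right-translation on $L^2([0,\infty))$ with multiplication by $e^{-az}$ on $\H^2(\{\Re z>0\})$, and that the uniqueness of the inner--outer factorization is being applied cleanly. The real work of the section is not in this corollary but in verifying the outer property for the specific $V=\mathcal{L}w$ of \eqref{def:w}, which is what allows the step ``$V\in\theta\H^2$ with $V$ outer $\Rightarrow$ $\theta$ constant'' to be invoked.
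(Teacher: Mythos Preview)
Your proposal is correct and follows essentially the same route as the paper: both arguments invoke Beurling--Lax to write the shift-invariant closure as $\theta\H^2$, then use the inner/outer factorization of $V=\theta G$ together with the hypothesis that $V$ is outer to force $\theta$ constant, concluding via the Paley--Wiener/Plancherel identification that $f\equiv 0$. The only cosmetic difference is that the paper defines $\mathcal{M}$ directly in $\H^2$ and appeals to Plancherel at the end, whereas you first set up $\mathcal{M}$ in $L^2([0,\infty))$ and then transport; the substance is identical.
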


\begin{proof} We first show that $\mathcal{M}=\overline{\text{span}\{e^{-az}V(z):a\geq0\}}$ is all of $\H^2$.  The proof of this fact is the same as that of \cite[Corollary~II.7.3]{MR2261424}, which treated the case of the disk.

We first note that $\mathcal{M}$ is a closed, shift-invariant subspace, and hence by Theorem~\ref{T:BL} we know that $\mathcal{M} = \theta\H^2$ for some inner function $\theta$. As $V\in \mathcal{M}$, we deduce that
\[
V=\theta U \qtq{for some}U\in\H^2.
\]
We write the inner/outer factorization of $U$ as $U=\vartheta O$.  Using the fact that $V$ is outer, while $\theta$ and $\vartheta$ (and hence $\theta\vartheta$) are inner, we can use the uniqueness of the inner/outer factorization deduce that $\theta\vartheta\equiv 1$.  This in turn guarantees that $\theta$ and $\vartheta$ are constant.  In particular, $\theta\H^2=\H^2$, yielding $\mathcal{M}=\H^2$.

To complete the proof, we observe that \eqref{BL-orthog} and Plancherel imply that $\mathcal{L}f\in \mathcal{M}^\perp=\{0\}$, which in turn guarantees $f\equiv 0$.  \end{proof}

Evidently, it is convenient to have a simple test to see if $V$ is outer.  The following suffices for our purposes: 

\begin{lemma}\label{L:outer-condition} Suppose $V\in\H^2$ extends continuously from $\{\Re z>0\}$ to $z\in i\R\backslash\{0\}$ and that there exists $\eps>0$ and $\beta\geq 1$ such that
\begin{equation}\label{V-bds}
\eps|z+1|^{-\beta} \leq |V(z)|\leq \eps^{-1}|z|^{-1}\qtq{for all}\Re z>0.
\end{equation}
Then $V$ is an outer function.
\end{lemma}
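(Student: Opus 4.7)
I would prove $V$ is outer via the canonical Nevanlinna factorization in the right half-plane, showing that both the Blaschke and singular inner factors of $V$ are trivial.

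The lower bound in \eqref{V-bds} immediately shows $V$ is zero-free on $\{\Re z > 0\}$, eliminating any Blaschke component. The canonical factorization then takes the form $V = e^{i\gamma}SO$, where $O$ is the outer function whose boundary modulus is $|V(iy)|$ and $S$ is a singular inner function, yielding
\[
\log|V(z)| = \log|O(z)| - c\,\Re z - P[\mu](z)
\]
for some $c\geq 0$ and some nonnegative singular measure $\mu$ on $\R$. Here $P[\mu](z)=\tfrac{1}{\pi}\int \tfrac{\Re z}{|z-iy|^2}\,d\mu(y)$ denotes the Poisson integral of $\mu$, and $\log|O(z)|$ is the Poisson integral of $\log|V(iy)|$. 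Proving $V$ is outer reduces to showing $c=0$ and $\mu\equiv 0$.

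To rule out $c>0$, I would test along the positive real axis $z=x>0$. The upper bound in \eqref{V-bds} gives $\log|V(iy)|\leq -\log|y|+\log(1/\eps)$, and the standard identity $\int P_x(y)\log|y|\,dy=\log x$ then yields $\log|O(x)|\leq -\log x+\log(1/\eps)$. Combining this with the lower bound $\log|V(x)|\geq -\beta\log(x+1)+\log\eps$ and the nonnegativity of $P[\mu](x)$ forces $cx\lesssim \log x$ for large $x$, whence $c=0$.

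To rule out $\mu\neq 0$, I would invoke the classical theorem that for any nonzero positive singular measure $\mu$ there exists $y_0\in\R$ with $P[\mu](z)\to +\infty$ as $z\to iy_0$ non-tangentially (in fact, $\mu$-a.e. such $y_0$ works). If $y_0\neq 0$, the continuity of $V$ at $iy_0$ together with the positive lower bound keeps $\log|V(z)|$ bounded as $z\to iy_0$; moreover, $\log|V(iy)|$ is continuous and bounded on a neighborhood of $y_0$, so $\log|O(z)|$ has a finite non-tangential limit there. The factorization then forces $P[\mu](z)$ to remain bounded near $iy_0$, contradicting the blow-up. The only remaining possibility is $\mu=c_0\delta_0$ with $c_0>0$. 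In this case, evaluating along $z=x\to 0^+$, the lower bound gives $\log|V(x)|\geq \log\eps-\beta\log 2$, while the factorization reads $\log|V(x)|=\log|O(x)|-c_0/(\pi x)$, so $\log|O(x)|\geq c_0/(\pi x)+O(1)$ must blow up like $1/x$; yet the upper bound still yields $\log|O(x)|\leq -\log x+\log(1/\eps)$, growing only logarithmically, a contradiction. The main obstacle is this separate treatment of a potential singular atom at the origin, where the hypothesis does not give continuous boundary values; the resolution exploits the mismatch between the $1/x$ divergence from a delta mass at $0$ and the only-logarithmic divergence of $\log|O(x)|$ permitted by \eqref{V-bds}.
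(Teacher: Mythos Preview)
Your argument is correct, but it takes a substantially different route from the paper's proof. The paper proceeds by conformally transporting to the disk via $M(w)=\tfrac{1-w}{1+w}$ and then invoking the criterion (Garnett, Corollary~II.4.7) that a function is outer whenever both it and its reciprocal lie in some $\H^q(\mathbb{D})$. Under $M$, the bounds \eqref{V-bds} become $|V\circ M(w)|\lesssim|1-w|^{-1}$ and $|V\circ M(w)|^{-1}\lesssim|1+w|^{-\beta}$, whose $q$-th powers have uniformly bounded circular means for any $0<q<1/\beta$; the whole proof is three lines. Your approach instead dismantles the inner factor piece by piece: the lower bound kills the Blaschke product, growth along the positive real axis kills the exponential factor $e^{-cz}$, continuity up to $i\R\setminus\{0\}$ together with the non-tangential blow-up of $P[\mu]$ at $\mu$-a.e.\ boundary point localizes any singular mass to the origin, and a Dirac mass at $0$ is then excluded by the mismatch between the $1/x$ divergence it would force on $\log|O(x)|$ and the merely logarithmic upper bound coming from \eqref{V-bds}. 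The paper's argument is much shorter and uses the two-sided bounds in a single uniform way, at the cost of importing a nontrivial black-box criterion; yours is longer and requires separate treatment of each component of the inner factor (including the case split at $y_0=0$), but is essentially self-contained once one grants the canonical factorization and the Fatou-type theorem for Poisson integrals of singular measures.
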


\begin{proof} We conformally transport the problem to the disk $\mathbb{D}$ via the M\"obius transformation $M(z):= \tfrac{1-z}{1+z}$, which maps $\mathbb{D}$ to $\{\Re z>0\}$ and respects the classes of inner and outer functions.  By \cite[Corollary~II.4.7]{MR2261424}, $V$ is an outer function if $V\circ M,\ 1/(V\circ M)$ belong to the Hardy space $\H^q(\mathbb{D})$ for some $q\in(0,\infty]$. Using \eqref{V-bds}, we see that 
\[
\sup_{r\in[0,1)} \int_{-\pi}^\pi \Bigl| V\bigl(\tfrac{1-re^{i\theta}}{1+re^{i\theta}}\bigr) \Bigr|^q + \Bigl| V\bigl(\tfrac{1-re^{i\theta}}{1+re^{i\theta}}\bigr) \Bigr|^{-q} \,d\theta < \infty 
\]
for any $0<q<\frac1\beta$.\end{proof}

\begin{proposition}\label{P:Nonvanishing} Let $w$ be as in \eqref{def:w}.  The function 
\begin{equation}\label{def:W}
W(z):=\int_0^\infty e^{-kz}w(k)\,dk,
\end{equation}
initially defined for $\Re z>\tfrac32$, admits a meromorphic extension to $\C$ given by
\[
W(z) = 9\sqrt{\pi}\ \frac{\Gamma(z+\frac12)}{\Gamma(z+1)}\ \frac{z-1}{z(2z-1)(2z-3)}.
\]
In particular, $W$ is outer in $\{\Re z>\tfrac74\}$, where it satisfies the bounds
\begin{equation}\label{W-lb}
|W(z)|\approx |z|^{-\frac52}.
\end{equation}

\end{proposition}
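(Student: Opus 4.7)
The plan is to reduce $W$ to classical Beta integrals. Since $w(0)=0$ and $w(k)=O(e^{3k/2})$ at infinity, for $\Re z > 3/2$ I would integrate by parts to obtain $W(z) = z^{-1}\int_0^\infty e^{-kz} w'(k)\,dk$. Setting $v = (e^k-1)^{1/2}$, so that $w = v^3 + 6v - 6\arctan v$, a direct computation gives
$$w'(k) = \tfrac{3}{2}(e^k-1)^{1/2}(e^k+2),$$
in which the $\arctan$ term has been eliminated. The substitution $t = e^k - 1$ together with the splitting $t+3 = (1+t) + 2$ then reduces $W(z)$ to
$$\frac{3}{2z}\Bigl[\int_0^\infty \frac{t^{1/2}}{(1+t)^{z}}\,dt + 2\int_0^\infty \frac{t^{1/2}}{(1+t)^{z+1}}\,dt\Bigr],$$
which are two standard Beta integrals evaluating to $\Gamma(3/2)\Gamma(z-3/2)/\Gamma(z)$ and $\Gamma(3/2)\Gamma(z-1/2)/\Gamma(z+1)$, respectively.

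To reach the closed form, I would combine these over a common denominator using $\Gamma(z-3/2) = \Gamma(z-1/2)/(z-3/2)$ and $\Gamma(z+1) = z\Gamma(z)$; the numerators then sum to a multiple of $3(z-1)\Gamma(z-1/2)$, which is what produces the factor $(z-1)$ in the stated formula. A final conversion via $\Gamma(z-1/2) = \Gamma(z+1/2)/(z-1/2)$ and $(z-1/2)(z-3/2) = (2z-1)(2z-3)/4$ then yields the stated expression. Meromorphic continuation to $\C$ is immediate from that expression.

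For the bounds $|W(z)| \approx |z|^{-5/2}$ and the outer property on $\{\Re z > 7/4\}$, the explicit formula makes matters routine. The rational factor $(z-1)/[z(2z-1)(2z-3)]$ has all of its poles ($z=0,1/2,3/2$) and its unique zero ($z=1$) in $\{\Re z \leq 3/2\}$, while $\Gamma(z+1/2)/\Gamma(z+1)$ is analytic and non-vanishing in $\{\Re z > -1/2\}$ and, by Stirling on sectors $|\arg z| \leq \pi/2-\delta$, satisfies $\Gamma(z+1/2)/\Gamma(z+1) = z^{-1/2}(1+O(|z|^{-1}))$. Combining this with $(z-1)/[z(2z-1)(2z-3)] \sim 1/(4z^2)$ gives $W(z) \sim \tfrac{9\sqrt{\pi}}{4}z^{-5/2}$ for $|z|$ large, and continuity plus non-vanishing on compacta in $\{\Re z \geq 7/4\}$ supply matching two-sided bounds there. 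To conclude outerness, I would apply Lemma~\ref{L:outer-condition} to $\tilde W(z) := W(z + 7/4)$ on $\{\Re z > 0\}$: the pole/zero analysis shows $\tilde W$ extends continuously to $i\R$, and the bound $|\tilde W(z)| \approx |z+7/4|^{-5/2}$ readily implies $\eps|z+1|^{-5/2} \leq |\tilde W(z)| \leq \eps^{-1}|z|^{-1}$ on $\{\Re z > 0\}$, with $\beta = 5/2$. The one place requiring care is the Gamma algebra in the second paragraph, where the $(z-1)$ factor --- corresponding to the unique zero of $W$ in the finite plane to the right of $\Re z = -1/2$ --- appears only through a nontrivial cancellation between the two Beta contributions; the rest of the argument is essentially bookkeeping with Stirling.
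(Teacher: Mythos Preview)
Your derivation of the closed form for $W(z)$ is correct and close to the paper's, though organized a little differently: the paper integrates by parts only in the $\arctan$ term (turning it into a $(e^k-1)^{-1/2}$ contribution) and then evaluates three Beta integrals via the substitution $u=e^{-k}$, whereas you integrate by parts on all of $w$ first---which eliminates the $\arctan$ entirely---and then evaluate two Beta integrals via $t=e^k-1$. Both routes lead to the same Gamma algebra, and your remark that the factor $(z-1)$ appears only through a cancellation between the two contributions is exactly right.

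There is one small gap in your asymptotic argument for \eqref{W-lb}. Invoking Stirling only on sectors $|\arg z|\leq \tfrac{\pi}{2}-\delta$ does not yield uniform two-sided bounds on the full half-plane $\{\Re z>\tfrac74\}$: as $|\Im z|\to\infty$ with $\Re z$ fixed near $\tfrac74$, one has $\arg z\to\pm\tfrac{\pi}{2}$, and neither the sector asymptotic nor your compact-set argument covers that region. The paper sidesteps this by citing a pointwise inequality valid throughout the half-plane, namely $|\Gamma(s+c)/\Gamma(s)|\leq |s|^c$ for $c\in[0,1]$ and $\Re s\geq\tfrac12(1-c)$ (Rademacher), which with $c=\tfrac12$ applied to $s=z+\tfrac12$ and to $s=z+1$ gives
\[
|z+\tfrac12|^{-1/2}\ \leq\ \biggl|\frac{\Gamma(z+\tfrac12)}{\Gamma(z+1)}\biggr|\ \leq\ \frac{|z+1|^{1/2}}{|z+\tfrac12|}
\qquad\text{for }\Re z>-\tfrac14.
\]
With this in place, your bound $|W(z)|\approx|z|^{-5/2}$ on $\{\Re z>\tfrac74\}$ and the application of Lemma~\ref{L:outer-condition} to $\tilde W(z)=W(z+\tfrac74)$ go through exactly as you describe.
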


\begin{proof} We begin with a special case of Euler's Beta integral.  Given $-1<\alpha<\Re z$, the change of variables $u=e^{-k}$ yields
\begin{align*}
\int_0^\infty e^{-kz}(e^k-1)^\alpha\,dk &= \int_0^1 u^{z-\alpha-1}(1-u)^\alpha\,du =\frac{\Gamma(z-\alpha)\Gamma(1+\alpha)}{\Gamma(z+1)}.
\end{align*} 
Thus, using the identity 
\[
e^{-kz}=-\tfrac{1}{z}\tfrac{d}{dk}e^{-kz}
\]
to integrate by parts in the $\atan$ term, along with the identities $\Gamma(\tfrac12)=\sqrt{\pi}$ and $\Gamma(z+1)=z\Gamma(z)$, we obtain
\begin{align*}
W(z) & = \int_0^\infty e^{-kz}\bigl[(e^k-1)^{\frac32}+6(e^k-1)^{\frac12}-6\atan\bigl((e^k-1)^{\frac12}\bigr)\bigr]\,dk \\
& = \int_0^\infty e^{-kz}\bigl[(e^k-1)^{\frac32}+6(e^k-1)^{\frac12}-\tfrac{3}{z}(e^k-1)^{-\frac12}\bigr] \,dk \\
& = \frac{1}{\Gamma(z+1)}\biggl[\Gamma(z-\tfrac32)\Gamma(\tfrac52)+6\Gamma(z-\tfrac12)\Gamma(\tfrac32)-\tfrac{3}{z}\Gamma(z+\tfrac12)\Gamma(\tfrac12)\biggr] \\
& = \frac{\sqrt{\pi}\ \Gamma(z+\frac12)}{\Gamma(z+1)}\biggl[ \frac{\frac32\cdot\frac12}{(z-\frac32)(z-\frac12)}+\frac{3}{z-\frac12}-\frac{3}{z}\biggr] \\
& = 9\sqrt{\pi}\ \frac{\Gamma(z+\frac12)}{\Gamma(z+1)}\ \frac{z-1}{z(2z-1)(2z-3)}
\end{align*}
for all $z\in\C$ with $\Re z>\frac32$. The extension to $\C$ now follows from analytic continuation. 

Regarding \eqref{W-lb}, we first recall
\cite[Theorem~A, p. 68]{Rademacher}, which says that
\[
\biggl|\frac{\Gamma(s+c)}{\Gamma(s)}\biggr|\leq |s|^c\qtq{for} c\in[0,1]\qtq{and}\Re s \geq \tfrac12(1-c).
\]
In particular, 
\[
|z+\tfrac12|^{-\frac12}\leq \biggl| \frac{\Gamma(z+\frac12)}{\Gamma(z+1)}\biggr| \leq \frac{|z+1|^{\frac12}}{|z+\frac12|\ } \qtq{provided}\Re z>-\tfrac14,
\]
which leads to \eqref{W-lb}. By Lemma~\ref{L:outer-condition}, this guarantees that $W$ is outer.\end{proof}

We are now ready to complete the proof of Theorem~\ref{P:key}.

\begin{proof}[Proof of Theorem~\ref{P:key}] Fix $\ell_0\in\R$.  Using \eqref{orthog-l0} and a change of variables, we may derive that 
\[
\int_0^\infty e^{\frac74k}[H(k-\ell_0)-\tilde H(k-\ell_0)] e^{-\frac74(k-a)}w(k-a)\,dk = 0 \qtq{for all}a\geq 0. 
\]
By the assumptions on $F,\tilde F$ and the definition of $H,\tilde H$ (cf. \eqref{def:H}), we have
\begin{align*}
|e^{\frac74k}[H(k-\ell_0)-\tilde H(k-\ell_0)]| &\lesssim e^{2\ell_0}e^{-\frac14 k}+e^{\frac{p+2}{2}\ell_0}e^{-[\frac{p}{2}-\frac34]k}+e^{\frac{\tilde p+2}{2}\ell_0}e^{-[\frac{\tilde p}{2}-\frac34]k} \\
& \lesssim_{\ell_0} e^{-\frac14 k}+e^{-[\frac{p}{2}-\frac34]k}+e^{-[\frac{\tilde p}{2}-\frac34]k}.
\end{align*}
As $p,\tilde p\in[2,\infty)$, we have
\[
e^{\frac74k}[H(k-\ell_0)-\tilde H(k-\ell_0)]\in L^2([0,\infty)).
\]
Similarly, by the definition of $w$ (cf. \eqref{def:w}), 
\[
|e^{-\frac74k}w(k)| \lesssim e^{-\frac14 k} \in L^2([0,\infty)). 
\]
Thus we may apply Corollary~\ref{C:BL} to deduce that 
\begin{equation}\label{eq-restrict}
H(k-\ell_0)=\tilde H(k-\ell_0) \qtq{for all}k\geq 0,
\end{equation}
provided we can verify that
\[
V(z) = \int_0^\infty e^{-zk} e^{-\frac74k}w(k)\,dk = W(z+\tfrac74)
\]
is an outer function on $\{\Re z>0\}$, where $W$ is as in \eqref{def:W}.  This follows from Proposition~\ref{P:Nonvanishing}. 

Now observe that \eqref{eq-restrict} implies that $G(\lambda)=\tilde G(\lambda)$ for all $\lambda\leq e^{\ell_0}$ and so $F(u)=\tilde F(u)$ for all $|u|\leq e^{\frac12\ell_0}$. \end{proof}

\section{Some special cases}\label{S:special} In this section, we discuss a few special cases inspired by previous works.  These require considerably less detailed information about the weight $w$, and consequently can be more easily adapted to other models. 

We first consider the case when $F$ and $\tilde F$ are of `generalized polynomial' type.  In particular, our first result is an extension of the results appearing in works such as \cite{MorStr, CarGal, PauStr}, in which it is shown that in the case $F(u)=\lambda |u|^{2k} u$ (with $k$ a positive integer), the scattering map uniquely determines $k$ and $\lambda$.  As our argument will only make use of the positivity of the weight $w$, it extends readily to all dimensions.

\begin{theorem}[Generalized polynomial case]\label{T:polynomial} Suppose
\[
F(u) = \sum_{p\in D} a_p |u|^p u \qtq{and} \tilde F(u)=\sum_{q\in \tilde D} \tilde a_q |u|^q u
\]
where $D,\tilde D$ are finite subsets of $[2,\infty)$ and $a_p,\tilde a_q\in\C$.  Let $\Omega_F,S_F:B\to H^1$ and $\Omega_{\tilde F},S_{\tilde F}:\tilde B\to H^1$ be the associated wave and scattering operators.  If $\Omega_F=\Omega_{\tilde F}$ or $S_F=S_{\tilde F}$ on $B\cap \tilde B$, then $F=\tilde F$.
\end{theorem}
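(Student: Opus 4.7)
The plan is to exploit the special structure of polynomial nonlinearities: $H(k)-\tilde H(k)$ will be a \emph{finite} linear combination of pure exponentials $e^{-rk}$, which collapses the inverse convolution problem from Proposition~\ref{C:12} to linear independence of exponentials weighted by strictly positive Laplace-transform values $W(r)$. No deep harmonic analysis (no Beurling--Lax, no outer functions) is required.

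First, I would invoke Proposition~\ref{C:12} to obtain
\[
\int_\R [H(k)-\tilde H(k)]\,w(k+\ell)\,dk = 0 \qtq{for all} \ell\in\R.
\]
Next, a direct computation from $G(|u|^2)=F(u)\bar u$ shows that for the polynomial nonlinearity $F(u)=\sum_{p\in D} a_p|u|^p u$,
\[
H(k) = G'(e^{-k})\,e^{-k} = \sum_{p\in D}(1+\tfrac{p}{2})\, a_p\, e^{-(1+p/2)k},
\]
and similarly for $\tilde H$. Setting $r:=1+\tfrac{p}{2}\in[2,\infty)$ and padding missing coefficients with zero, I can write
\[
H(k)-\tilde H(k) = \sum_{r\in R} b_r\, e^{-rk}
\]
for a \emph{finite} set $R\subset [2,\infty)$, where $b_r$ is a nonzero multiple of $a_{2(r-1)}-\tilde a_{2(r-1)}$.

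Substituting this finite sum into the orthogonality relation and changing variables $k\mapsto k-\ell$ gives
\[
\sum_{r\in R} b_r\, W(r)\, e^{r\ell} = 0 \qtq{for all}\ell\in\R,
\]
where $W(r)=\int_0^\infty e^{-rk} w(k)\,dk$ is finite because $r\geq 2>\tfrac32$. Since the functions $\ell\mapsto e^{r\ell}$ indexed by distinct real $r$ are linearly independent on $\R$, each $b_r W(r)$ must vanish. To conclude $b_r=0$, and hence $a_p=\tilde a_p$ for every $p\in D\cup\tilde D$, I would verify that $W(r)>0$ for every real $r\geq 2$. This is immediate from the pointwise positivity of $w$ on $(0,\infty)$: substituting $u=(e^k-1)^{1/2}$ in the definition \eqref{def:w} of $w$ and computing $\tfrac{d}{du}[u^3+6u-6\atan u]=3u^2+\tfrac{6u^2}{1+u^2}>0$, together with $w(0)=0$, shows $w>0$ on $(0,\infty)$ and hence $W(r)>0$.

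There is no serious obstacle in this argument: it simply replaces the Beurling--Lax machinery of Section~\ref{S:key} by the elementary linear independence of finitely many exponentials. The only property of the weight $w$ it uses is nonnegativity, so this strategy will go through in any space dimension as soon as one has the analogue of Lemma~\ref{L:2} with a nonnegative weight whose Laplace transform does not vanish on the relevant discrete set of exponents --- this is precisely why the polynomial case can be handled at this level of generality.
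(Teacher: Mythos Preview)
Your proposal is correct and follows essentially the same approach as the paper's own proof: invoke Proposition~\ref{C:12}, observe that $H-\tilde H$ is a finite sum of pure exponentials, reduce to a vanishing linear combination of $\ell\mapsto e^{r\ell}$ with coefficients $b_r W(r)$, and conclude via linear independence together with the strict positivity of $W(r)$ coming from $w>0$ on $(0,\infty)$. The only cosmetic differences are that the paper indexes by the original powers $p\in D\cup\tilde D$ (writing the exponent as $\tfrac{p+2}{2}$) rather than by $r=1+\tfrac p2$, and it justifies $w\geq 0$ by recalling that $w$ arises as the measure of superlevel sets rather than by your direct derivative computation.
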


\begin{proof} We first note that $F,\tilde F$ are admissible in the sense of Definition~\ref{D:admissible}, so that Theorem~\ref{T:scattering} applies and yields the existence of the wave and scattering operators.  The growth parameters of $F,\tilde F$ correspond to the largest elements of $D,\tilde D$. 

From the explicit form of $F$ and $\tilde F$ and the definition of $H,\tilde H$ (see \eqref{def:H}), we find
\[
H(k)-\tilde H(k) = \sum_{r\in E} b_r e^{-\frac{r+2}{2}k}
\]
for $E=D\cup\tilde D$ and coefficients $b_r\in\C$.  To show that $F=\tilde F$, we will show that $b_r=0$ for each $r\in E$.

In view of Proposition~\ref{C:12}, if $\Omega_F=\Omega_{\tilde F}$ or $S_F = S_{\tilde F}$ on $B\cap \tilde B$, 
\begin{equation}\label{7:00}
\sum_{r\in E}b_r \biggl[ \int_0^\infty e^{-\frac{r+2}{2}k}w(k)\,dk\biggr] e^{\frac{r+2}{2}\ell} = 0 \qtq{for all}\ell\in\R,
\end{equation}
with $w$ as in \eqref{def:w1}.  As $w$ is defined as the measure of superlevel sets, it is a nonnegative function.  This can also be verified directly from the explicit formula \eqref{def:w1}.  As $w$ is not identically zero, it follows that the coefficients in square brackets in \eqref{7:00} are always positive.  Thus, by the linear independence of the functions $\ell\mapsto e^{\ell(r+2)/2}$, we see that $b_r\equiv 0$. 
\end{proof}

We next consider the case when $F$ and $\tilde F$ have the same size as a common single-power nonlinearity.

\begin{definition}[$p$-admissible] Let $2\leq p<\infty$.  We call $F:\C\to\C$ \emph{$p$-admissible} if $F(u)=h(|u|^2)u$ for some $h:[0,\infty)\to\C$ with
\[
h(0)= 0 \qtq{and} |h'(\lambda)| \approx \lambda^{\frac{p}{2}-1}. 
\]
\end{definition}

In particular, if $F$ is $p$-admissible then it is admissible in the sense of Definition~\ref{D:admissible} and we have 
\[
|F(u)|\approx|u|^{p+1}.
\]
Our consideration of this case is inspired by \cite{SBUW}, which treated the nonlinear wave equation in three dimensions with quintic-type nonlinearities. 

\begin{theorem}[Single-power case]\label{T:single-power} Suppose $F$ and $\tilde F$ are $p$-admissible with $p\geq 2$ and let $\Omega_F,S_F:B\to H^1$ and $\Omega_{\tilde F},S_{\tilde F}:\tilde B\to H^1$ be the associated wave and scattering operators.  If $\Omega_F=\Omega_{\tilde F}$ or $S_F=S_{\tilde F}$ on $B\cap \tilde B$, then $F=\tilde F$.
\end{theorem}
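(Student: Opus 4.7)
The plan is to reuse Sections~\ref{S:SDS} and~\ref{S:proof} verbatim: Proposition~\ref{C:12} converts the hypothesis into the convolution identity
\[
\int_\R [H(k)-\tilde H(k)]w(k+\ell)\,dk = 0 \qtq{for all}\ell\in\R,
\]
so the whole game is to extract $H=\tilde H$ from this. The decisive simplification afforded by $p$-admissibility is the two-sided bound $|h'(\lambda)| \approx \lambda^{p/2-1}$, which, after integrating from zero, yields $|G'(\lambda)|, |\tilde G'(\lambda)| \lesssim \lambda^{p/2}$ uniformly for $\lambda \geq 0$. Consequently the rescaled difference
\[
\Delta(k) := e^{(p+2)k/2}[H(k)-\tilde H(k)]
\]
lies in $L^\infty(\R)$, rather than exhibiting the one-sided exponential blow-up as $k\to-\infty$ that forced the use of Beurling--Lax in Theorem~\ref{P:key}.

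Absorbing the exponential into the weight, I introduce $\mu(m) := e^{-(p+2)m/2}w(m)$, which is supported on $[0,\infty)$, behaves like $m^{3/2}$ near $0$, and decays like $e^{(1-p)m/2}$ at infinity; hence $\mu \in L^1(\R)$ for any $p \geq 2$. A routine manipulation of the orthogonality condition then gives $\int_\R \Delta(k)\mu(k+\ell)\,dk = 0$ for every $\ell \in \R$, which is precisely the assertion that the bounded function $\Delta$ annihilates every translate of $\mu$ in $L^1$. In this $L^\infty$-against-$L^1$ configuration, Wiener's Tauberian Theorem delivers $\Delta \equiv 0$ as soon as the Fourier transform of $\mu$ has no real zeros.

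The main (and only substantive) step is therefore verifying that $\hat\mu$ never vanishes on $\R$. A direct computation identifies this with a Laplace transform value,
\[
\hat\mu(\xi) = \int_0^\infty e^{-[(p+2)/2 + i\xi]m}w(m)\,dm = W\bigl(\tfrac{p+2}{2}+i\xi\bigr),
\]
and the explicit formula for $W$ established in Proposition~\ref{P:Nonvanishing} shows that the unique zero of $W$ is at $z = 1$ while its only poles are at $z \in \{0,\tfrac12,\tfrac32\}$. Since $p \geq 2$ places the line $\Re z = (p+2)/2$ at abscissa at least $2$, it avoids all of these points, so $\hat\mu(\xi) \neq 0$ for every $\xi \in \R$. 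Wiener then gives $\Delta \equiv 0$, whence $H = \tilde H$; integrating in $\lambda$ recovers $G = \tilde G$ and finally $F = \tilde F$. The whole Beurling--Lax apparatus of Section~\ref{S:key} thus collapses, for this special case, to the single explicit observation that the unique zero of $W$ lies strictly to the left of the line $\Re z = (p+2)/2$.
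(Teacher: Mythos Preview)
Your proof is correct and follows essentially the same route as the paper: apply Proposition~\ref{C:12}, rescale by $e^{(p+2)k/2}$ so that the difference lands in $L^\infty$ and the weight in $L^1$, then invoke Wiener's Tauberian Theorem together with the nonvanishing of $W$ on the line $\Re z = (p+2)/2$ established via Proposition~\ref{P:Nonvanishing}. One small imprecision: $W$ is not globally zero-free away from $z=1$ (there are further zeros at the negative integers coming from $1/\Gamma(z+1)$), but since you only need nonvanishing for $\Re z \geq 2$ this does not affect the argument.
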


\begin{proof} By Proposition~\ref{C:12}, we have
\[
\int_{\R}[H(k)-\tilde H(k)]w(k+\ell)\,dk =0 \qtq{for all}\ell\in\R,
\]
with $H,\tilde H$ as in \eqref{def:H} and $w$ as in \eqref{def:w1}.  We rewrite this as 
\[
\int_\R \ e^{k\frac{p+2}{2}}[H(k)-\tilde H(k)]\cdot  e^{-(k+\ell)\frac{p+2}{2}}w(k+\ell)\,dk = 0 \qtq{for all}\ell\in\R
\]
and note that as $F$ and $\tilde F$ are both $p$-admissible with $p>1$,
\[
e^{k\frac{p+2}{2}}[H(k)-\tilde H(k)]\in L^\infty \qtq{and} e^{-k\frac{p+2}{2}}w(k)\in L^1. 
\]
In particular, using Wiener's Tauberian Theorem \cite{Wiener}, we may deduce that $H=\tilde H$ (and hence $F=\tilde F$), provided
\begin{equation}\label{nonzero-special}
W(\tfrac{p+2}{2}+i\xi):=\int_0^\infty e^{-k[\frac{p+2}{2}+i\xi]}w(k)\,dk \neq 0 \qtq{for all}\xi\in\R. 
\end{equation}
By Proposition~\ref{P:Nonvanishing}, $W(z)\neq 0$ whenever $\Re z>\tfrac32$, which includes \eqref{nonzero-special} as a special case.%
\end{proof} 

It is truly necessary to verify \eqref{nonzero-special} for \emph{all} values of $\xi\in\R$, for otherwise $H-\tilde H$ could be a sinusoid of the corresponding frequency.

\end{document}